\DeclareMathAlphabet\EuScript{U}{eus}{m}{n}
\DeclareFontFamily{U}{BOONDOX-calo}{\skewchar\font=45 }
\DeclareFontShape{U}{BOONDOX-calo}{m}{n}{
  <-> s*[1.05] BOONDOX-r-calo}{}
\DeclareFontShape{U}{BOONDOX-calo}{b}{n}{
  <-> s*[1.05] BOONDOX-b-calo}{}
\DeclareMathAlphabet{\mathcalboondox}{U}{BOONDOX-calo}{m}{n}
\SetMathAlphabet{\mathcalboondox}{bold}{U}{BOONDOX-calo}{b}{n}
\DeclareMathAlphabet{\mathbcalboondox}{U}{BOONDOX-calo}{b}{n}
\theoremstyle{plain}
\newtheorem{theorem}{Theorem}[section]
\newtheorem{lemma}[theorem]{Lemma}
\newtheorem{proposition}[theorem]{Proposition}
\newtheorem{proposition*}{Proposition}
\newtheorem{claim}{Claim}[section]
\theoremstyle{definition}
\theoremstyle{remark}
\newtheorem{remark}{Remark}[section]
\numberwithin{equation}{section}
\newcommand{\ltext}[2]{%
  \@bsphack
  \csname phantomsection\endcsname % in case hyperref is used
  \def\@currentlabel{#1}{\label{#2}}%
  \@esphack
}
\DeclareMathOperator{\N}{\mathbb{N}}
\DeclareMathOperator{\Z}{\mathbb{Z}}
\DeclareMathOperator{\R}{\mathbb{R}}
\DeclareMathOperator{\Hh}{\mathcal{H}}
\DeclareMathOperator{\Om}{\Upomega}
\DeclareMathOperator{\A}{\mathscr{A}}
\DeclareMathOperator{\p}{\mathbbm{P}}
\DeclareMathOperator{\E}{\mathbbm{E}}
\title[Competition on a RGG]{Coexistence of Species in a Competition Model on Random Geometric Graphs}
\author[C. F. Coletti \and L. R. de Lima]
{Cristian F. Coletti \and Lucas R. de Lima}
\address{Center for Mathematics, Computation, and Cognition\\
Federal University of ABC\\
Av. dos Estados, 5001\\
09210-580 Santo Andr\'e, S\~ao Paulo\\
Brazil}
\email{cristian.coletti@ufabc.edu.br}
\address{Department of Statistics\\
Institut of Mathematics and Statitics\\
University of S\~ao Paulo\\
Rua do Mat\~ao, 1010\\
05508-090 São Paulo - SP\\
Brazil}
\email{lrdelima@ime.usp.br}
\email{lrdelimath@gmail.com}
\thanks{{\bf Funding:} Research supported by grants \#2017/10555-0, \#2019/19056-2, \#2020/12868-9, \#2023/13453-5, and \#2024/06021-4, S\~ao Paulo Research Foundation (FAPESP).}
\keywords{Competition model, invasion dynamics, population growth, stochastic processes.}
\subjclass[2020]{60D05, 92D25, 60G60}
\begin{document}

%\nocite{*}

\begin{abstract}
    This paper investigates the coexistence of two competing species on random geometric graphs (RGGs) in continuous time. The species grow by occupying vacant sites according to Richardson's model, while simultaneously competing for occupied sites under the dynamics of the voter model. Coexistence is defined as the event in which both species occupy at least one site simultaneously at any given time. We prove that coexistence occurs with strictly positive annealed probability by applying results from moderate deviations in first-passage percolation and random walk theory, with a focus on specific regions of the space.
\end{abstract}

\maketitle

%\tableofcontents

\section{Introduction}

The study of species coexistence in competition models, particularly those grounded in probabilistic frameworks, provides valuable insights into multispecies dynamics in community ecology models (see \citet{lanchier2024} for a detailed discussion). Probabilistic competition interfaces vary depending on the combination of interacting stochastic processes,  including first-passage competition (introduced by \citet{haggstrom1998}), multitype contact processes (see \citet{neuhauser1992,durrett1997}), and chase-escape models (see \citet{tang2018phase,durret2020}).

The definition of species coexistence in competition models depends on the model’s properties. For instance, in first-passage percolation (FPP) models, coexistence is typically defined as the event in which species occupy infinitely many sites, meaning all species continue to occupy vacant sites as time progresses (as discussed in \citet{garet2005}). The model analyzed in this paper was introduced by \citet{kordzakhia2005} for two species competing on the hypercubic lattice $\Z^d$. It combines a first-passage growth with invasion dynamics governed by the voter model. Here, coexistence is defined as the event where, at any given time, both species occupy at least one site of the graph. This model shares similarities with chase-escape and multitype contact processes but exhibits some particular spatial and temporal properties that influence long-term behavior and species interactions.

In this work, we extend the model to the infinite connected component of random geometric graphs (RGGs), which serve as a random environment for the process, and explore the resulting implications.

A crucial factor used by \citet{kordzakhia2005} was that Richardson's growth model exhibits a uniformly curved limiting shape on $\Z^d$. \citet{coletti2023} established that the asymptotic shape of the same model on RGGs is an Euclidean ball, and \citet{delima2024speed} derived moderate deviations for this setting. We combine these results with additional techniques to control the growth and invasion dynamics, enabling an analysis of the survival of the species within given regions of the space in this random environment.

\subsection{Definition of the competition model and main result}

We consider a class of random geometric graphs in~$\mathbb{R}^d$ defined as follows. Let~$d \in \mathbb{N}$ with~$d \geq 2$,~$\lambda > 0$, and~$r > 0$. The random geometric graph~$\mathcal{G}$ has a vertex set~$V$ generated by a homogeneous Poisson point process on~$\mathbb{R}^d$ with intensity~$\lambda$. The edge set~$\mathcal{E}$ is defined by $\mathcal{E} := \{\{x, y\} : x, y \in V \text{ and } \|x - y\| < r\}$, where $\|\cdot\|$ denotes the Euclidean norm.

As established in the literature (e.g., Penrose~\cite{penrose1996}), there exists a critical threshold $r_c(\lambda) \in (0, +\infty)$ such that, for $r < r_c(\lambda)$, the graph~$\mathcal{G}$ almost surely does not contain any infinite connected component. Conversely, for $r > r_c(\lambda)$,~$\mathcal{G}$ will almost surely exhibit a unique infinite component. In this work, we focus on the supercritical case, $r > r_c(\lambda)$, where we denote this almost surely unique infinite connected component by~$\mathcal{H}$. Given $x\in\R^d$, let $q(x)$ denote the vertex of $\Hh$ that is closest to $x$ in the Euclidean distance. In cases where multiple vertices minimize the distance, an arbitrary but deterministic procedure is used to select $q(x)$.

Let us designate the two competing species as red and blue, each establishes territories within the spatial domain $\Hh$. The occupancy of sites by the red species at any given time $t \in [0, +\infty)$ is symbolized by $\upxi(t)$, while $\upzeta(t)$ represents the analogous territory held by the blue species.

At the outset, $\upchi(t) := \upxi(t) \dot{\cup} \upzeta(t) \subseteq \Hh$ is defined as the combined inhabited territory. The dynamics governing growth and competition are determined by Richardson's and voter's models. Within this framework, the competition unfolds as follows:
\begin{itemize}
    \item Unoccupied sites at time $t$, denoted as $x \not\in \upchi(t)$, are subject to occupation by either species. The rate of occupation by the red or blue species is determined by the presence of neighboring sites already occupied by each species. More specifically, the rates are
    \[\sum_{y \sim x}\mathbbm{1}_{\upxi(t)}(y) \quad \text{and} \quad \sum_{y \sim x}\mathbbm{1}_{\upzeta(t)}(y).\]

    \item Occupied sites at time $t$, specifically $x \in \upchi(t)$, witness potential invasion. Here, the transition to a different species occurs at rates contingent upon neighboring sites' current occupants.
    \begin{itemize}
        \item If $x \in \upxi(t)$, it undergoes a color shift to blue at rate $\sum_{y \sim x}\mathbbm{1}_{\upzeta(t)}(y)$.
        \item Likewise, if $x \in \upzeta(t)$, the site is becomes inhabited by the red species at rate $\sum_{y \sim x}\mathbbm{1}_{\upxi(t)}(y)$.
    \end{itemize}
\end{itemize}

\begin{figure}[htb!]
    \centering
    \includegraphics[width=0.8\linewidth]{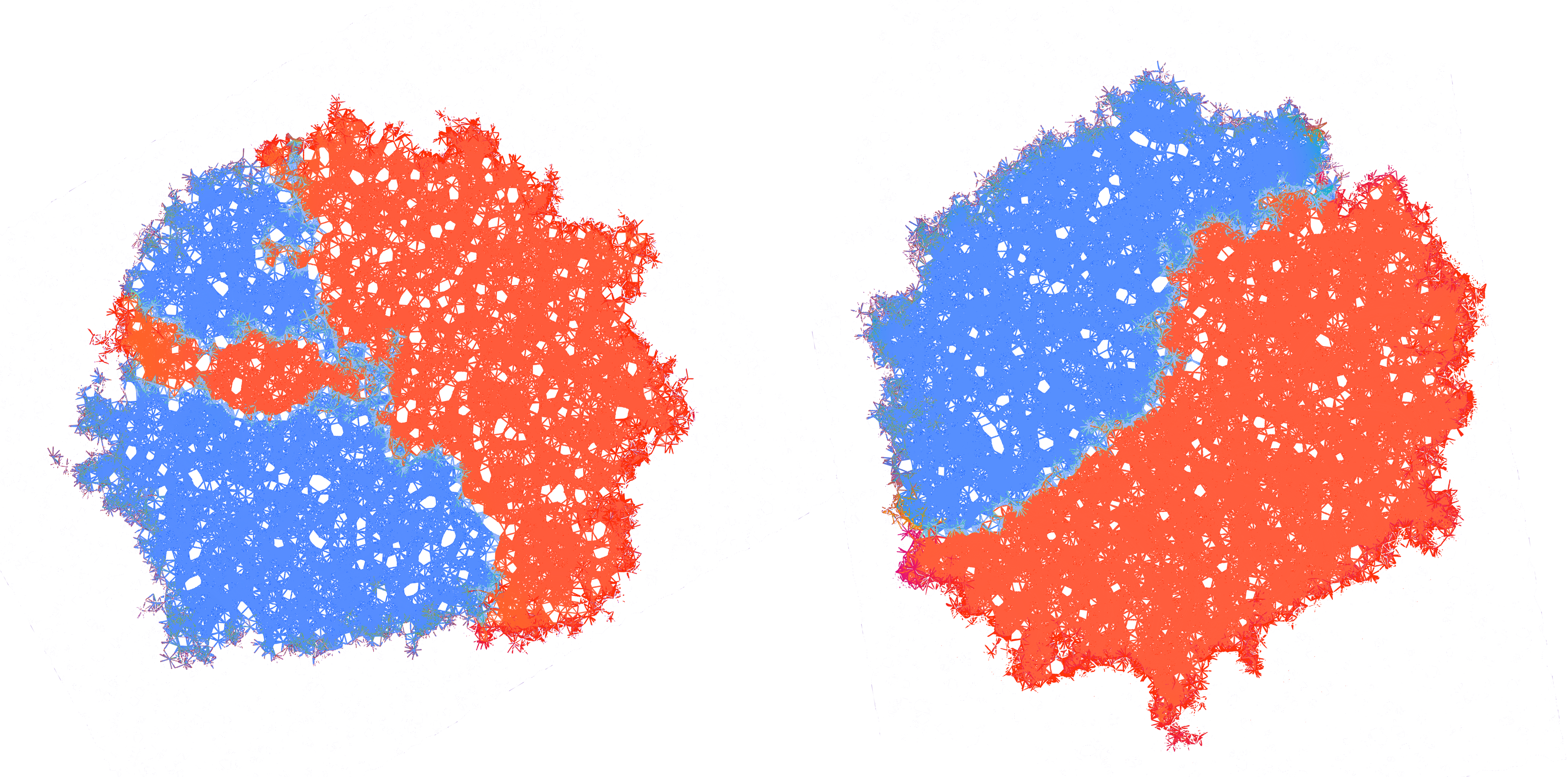}
    \caption{Two simulations of the competition model on a RGG.}
\end{figure}

\subsubsection*{Initial configuration} The initial conditions for $\upxi(0)$ and $\upzeta(0)$ are established by two given sets in the $d$-dimensional space. The two-species competition model is considered to have a \emph{finite initial configuration} if there exist two non-empty disjoint sets $W$ and $W'$ in a finite region of $\R^d$ such that
\[\upxi(0) \subseteq q(W), \ \upzeta(0) \subseteq q(W'), \ \text{and} \ \upchi(0)=q(W \cup W').\]
The configuration of the set $q(W) \cap q(W')$ can be chosen arbitrarily with any random or deterministic rule. A finite initial configuration  for the competition model is called \textit{viable} when $\p(\upxi(0)\neq\varnothing \text{ and }\upzeta(0)\neq\varnothing)>0$.

\begin{remark}
    One can easily verify that finite disjoint sets do not necessarily determine viable finite initial configurations. Consider the deterministic rule that $\upxi(0) = q(W)$ and $\upzeta(0)=q(W')\setminus q(W)$. Then if $W$ is the Euclidean ball $B(o,2r)$ and $W'=\{o\}$, then $\upzeta(0)=\varnothing$ $~\p$-a.s. On the other hand, considering the same sets $W$ and $W'$, if the rule to determine the colouring of $q(W)\cap q(W')$ is random with Bernoulli distribution $p \in (0,1)$, for instance, then the finite initial configuration is viable.
\end{remark}

\begin{remark} \label{rmk:fpp.growth}
    The definition of the initial configuration using the deterministic sets is a straightforward consequence of the FPP model as studied in \cite{coletti2023,delima2024speed}. Once $u\in \R^d$ is fixed, the random variable $T(u,v)$ is the passage time from $q(u)$ to $q(v)$. The growth dynamics of both species given by $\upchi(t)$ is now given by the union of FPP growth originated from $q(W \cup W')$.
\end{remark}

Our primary objective with this model is to determine whether both species coexist with positive probability. To this end, we introduce the event of \textit{coexistence} of both species as
\[\mathrm{Coex}(\upxi,\upzeta):=\left\{\text{for all }t\geq 0, \ \upxi(t) \neq \varnothing \text{ and } \upzeta(t) \neq \varnothing \right\}.\]

We then present the following theorem on the annealed probability of coexistence.
\begin{theorem} \label{thm:coexistence}
    Let $d \geq 2$ and $r>r_c(\lambda)$, and consider the two-species competition model defined above. Then, for any viable finite initial configuration, one has
    \begin{equation*}
        \p\left(\operatorname{Coex}(\upxi,\upzeta)\right)>0.
    \end{equation*}
\end{theorem}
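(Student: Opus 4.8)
The plan is to exhibit a single event of strictly positive probability on which both species persist for all time. Since we only need a positive lower bound, we are free to condition on favourable behaviour of the environment and of the early dynamics. First, using viability, I would pass to the positive-probability event that the initial configuration contains at least one red vertex $a\in\upxi(0)$ and one blue vertex $b\in\upzeta(0)$; by translation we may take $a,b$ near the origin and set $e:=(a-b)/\|a-b\|$. The strategy is to force the two species to \emph{escape into opposite caps}: red should occupy and retain a growing region in the direction $+e$, blue in the direction $-e$, with the red--blue interface confined to a sublinear neighbourhood of the separating hyperplane $\{\langle x,e\rangle=0\}$. Because extinction of a species can occur only through a voter flip at the interface --- growth only adds sites of a colour already present, so the count of a colour decreases only when one of its sites flips, which requires a neighbour of the other colour --- confinement of the interface to a slab of width $o(t)$ inside a region of radius of order $t$ guarantees that neither $\upxi(t)$ nor $\upzeta(t)$ is ever empty.

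The first ingredient controls the arena. By the shape theorem of \cite{coletti2023}, the combined first-passage cluster $\upchi(t)$, which by \cref{rmk:fpp.growth} grows as a single Richardson process from $q(W\cup W')$, has a Euclidean ball as its asymptotic shape; the moderate-deviation estimates of \cite{delima2024speed} upgrade this to a quantitative statement. Concretely, I would fix an exponent $\beta\in(1/2,1)$ and a time mesh $t_n$, and use the moderate-deviation bounds together with Borel--Cantelli to obtain a probability-one event on which, for all large $t$, the cluster $\upchi(t)$ is sandwiched between $B(o,\kappa t-t^{\beta})\cap\Hh$ and $B(o,\kappa t+t^{\beta})\cap\Hh$, where $\kappa$ is the speed constant of \cite{coletti2023}. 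The same estimates give matching control on the passage times $T(a,x)$ and $T(b,x)$ to far points $x$, so that along the two caps each source reaches its target region at the linear rate with only sublinear error. As this growth-control event has full probability, it may be intersected freely with the positive-probability events constructed below.

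The second and decisive ingredient controls the competition interface. Here I would invoke the duality between the voter dynamics and coalescing random walks: the colour of a site $x$ at time $t$ is read off by running the dual symmetric continuous-time random walk backward from $(x,t)$ and recording the colour of the initial site it lands on. Two features of the symmetric structure are essential. First, by the distributional isotropy of the Poisson environment and the fact that the limiting shape is a ball, the dual walk carries no macroscopic drift, so the interface has no systematic tendency to invade either cap. Second, for a site $x$ lying deep inside the $+e$ cap, at Euclidean distance of order $t$ from the hyperplane, the event that $x$ is blue forces the dual walk to traverse a macroscopic distance of order $t$ within an available time of order $t$; since the diffusive scale $\sqrt{t}$ is $o(t)$, standard random-walk deviation estimates make this improbable, and a maximal-inequality refinement over the time mesh upgrades ``red at a fixed time'' to ``red throughout a window.'' Carrying this out for both caps simultaneously yields a positive-probability event on which red always retains a present, protected site in the $+e$ cap and blue always retains one in the $-e$ cap; intersecting with the growth-control event then gives $\p(\operatorname{Coex}(\upxi,\upzeta))>0$.

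The main obstacle is the rigorous execution of this interface control in the coupled, growing, random-graph setting. Unlike the pure voter model on a fixed lattice, the domain on which the dual walks live is itself growing --- a vertex enters the configuration only once the first-passage front reaches it --- so the duality must be set up on a space--time graph with time-dependent vertex set, and the dual walk's jump rates fluctuate with the random local degrees of $\Hh$. Establishing the no-drift property and the diffusive deviation bounds uniformly in this random environment, and in particular propagating the single-time survival estimate to the all-times statement demanded by $\operatorname{Coex}(\upxi,\upzeta)$, is where the bulk of the technical work lies; this is precisely the point at which the moderate-deviation input of \cite{delima2024speed} and the random-walk estimates must be combined.
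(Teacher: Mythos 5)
Your overall strategy is the one the paper follows: pick two antipodal directions, use the shape theorem and the moderate--deviation estimates of \cite{delima2024speed} to sandwich the Richardson cluster $\upchi(t)$ between balls with $t^{\beta}$ error, control invasion through voter--model duality by showing the dual walk cannot traverse a macroscopic distance in the available time, and propagate single-time estimates along a geometric time mesh via a union bound. Those are precisely the ingredients of \cref{lm:control.invasion}, \cref{lm:control.species.growth} and the final summation in Section~\ref{sec:proof}, so on that side your sketch is aligned with the paper (with one cosmetic difference: the paper does not confine the interface to a slab around a hyperplane, which is more than is needed; it only protects two cones $\operatorname{Cone}(w,r_n)$ and $\operatorname{Cone}(w',r_n)$ whose angles $r_n$ decrease to a positive limit, and lets the rest of space evolve arbitrarily).

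The genuine gap is at the very start: you assert that one can ``force the two species to escape into opposite caps,'' but for an arbitrary viable finite initial configuration this is not automatic, and it is exactly what the paper's intermediate condition \eqref{intermediate.condition} and \cref{lm:intermediate.condition} are for. The sets $q(W)$ and $q(W')$ can be entangled on $\Hh$ in ways that obstruct the desired rearrangement --- the paper explicitly isolates the worst case in which the union of the connecting geodesic subgraphs is a line graph, where no growth-invasion sequence can place red on one side and blue on the other in the required pattern. Resolving this requires a dedicated combinatorial lemma (\cref{lm:irreducibility}: existence of a finite growth-invasion path through a vertex of degree at least $3$), a Poisson argument guaranteeing that such a vertex exists in the target annular sectors with high probability, and Harris's graphical construction to realize the prescribed finite sequence of exponential clocks with positive probability before time $t_0$. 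Without this step your construction never reaches the configuration from which the duality/moderate-deviation machinery takes over, so the proof does not close as written; everything downstream of it, however, matches the paper's argument in substance.
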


\subsection{Organization of the paper.}

Foundational results on random geometric graphs and first-passage percolation are presented in Section \ref{sec:basic}, which determines the random growth of species in space over time. Section \ref{sec:intermediate.invasion} introduces an intermediate condition that plays a key role in the coexistence analysis and provides results on controlling invasion times. Finally, the proof of the coexistence theorem, which constitutes the main result of the paper, is given in Section \ref{sec:proof}.

\subsection{Notation.}

Throughout the text, we adopt the convention that $\|\cdot\|$ represents the Euclidean norm on $\R^d$. Consider $o$ as the origin of $\R^d$. The competition model on RGGs is assumed to be defined on a probability space $(\Om, \A, \p)$. Here, $\N$ denotes the set of natural numbers $\{1, 2, \dots\}$, and $\N_0$ is defined as $\N\cup\{0\}$. All remaining notation is introduced within the text.

\section{Basic results of FPP on RGGs} \label{sec:basic}

Consider the random set $\mathcal{P}_\lambda$, representing points in $\mathbb{R}^d$ generated by a homogeneous Poisson point process (PPP) with intensity $\lambda > 0$. Let us revisit the definitions provided in the introduction. by defining the random geometric graph (RGG)~$\mathcal{G} = (V, \mathcal{E})$ on $\mathbb{R}^d$ as follows:
\[
  V = \mathcal{P}_\lambda \quad \text{and} \quad
  \mathcal{E} = \big\{\{u,v\} \subseteq V: \|u-v\|<r,~ u \neq v\big\}.
\]
Since $\lambda^{-\frac{1}{d}}\mathcal{P}_\lambda \sim \mathcal{P}_1$, we consider $\lambda$ as a fixed parameter and allow $r$ to vary due to the homogeneity of the norm. Thus, unless specified otherwise, we set $\lambda=1$ and denote $\mathcal{P}_1$ as $\mathcal{P}$.% Let $\left(\Upxi,\F,\mu\right)$ denote the probability space induced by the construction of $\mathcal{P}$. 

Our objective is to study the spread of infection within an infinite connected component of $\mathcal{G}$. According to continuum percolation theory (see Penrose \cite[Chapter~10]{penrose1996}), for all $d \geq 2$, there exists a critical $r_c(\lambda)>0$ (or $r_c$ for $\lambda=1$) such that $\mathcal{G}$ has an (unique) infinite component $\Hh$ $\p$-\textit{a.s.} for all $r>r_c$. Denoting the sets of vertices and edges of $\Hh$ as $V(\Hh)$ and $\mathcal{E}(\Hh)$ respectively, we often use $\Hh$ to represent $V(\Hh)$.

Let $\mathbbm{B}(t)$ denote the hypercube $[-t/2, t/2]^d$, and consider the Euclidean ball denoted as $B(x,t) := \{y \in \R^d \colon \|y-x\| < t\}$. Fix $\theta_{r}$ as $\p \big( B(o,r) \cap \Hh \neq \varnothing \big)$.  The following proposition presents a fundamental result concerning the volume of $\Hh$, which is a weaker version of Theorem 1 in Penrose and Pisztora~\cite{penrose1996}.

\begin{proposition}\label{prop:Hn.growth}
    Let $d \geq 2$, $r>r_c$ and $\varepsilon \in (0,1/2)$. Then, there exists $\mathtt{c}_0>0$ and $t_0>0$ such that, for all $t \geq s_0$,
    \[
        \p\left((1-\varepsilon)\theta_{r} <\frac{|\Hh \cap \mathbbm{B}(t) |}{t^d}  <(1+\varepsilon)\theta_{r} \right) \geq 1 -\exp(-\mathtt{c}_0t^{d-1}).
    \]
\end{proposition}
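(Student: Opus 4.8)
The plan is to write $N_t := |\Hh \cap \mathbbm{B}(t)|$ and split the estimate into an exact identification of the centering constant and a surface-order concentration bound, reducing the statement to
\[
    \p\!\left(|N_t - \theta_r t^d| > \varepsilon\,\theta_r t^d\right) \leq \exp(-\mathtt{c}_0 t^{d-1}).
\]
First I would pin down the mean. Writing $N_t = \sum_{x\in\mathcal{P}}\mathbbm{1}_{\mathbbm{B}(t)}(x)\,\mathbbm{1}[x\in\Hh]$ and applying the Mecke equation (with the intensity normalised to $\lambda=1$) gives
\[
    \E[N_t] = \int_{\mathbbm{B}(t)} \p\!\left(x\in\Hh(\mathcal{P}\cup\{x\})\right)\,dx .
\]
The key deterministic observation is that, since the infinite cluster is a.s. unique and attaching a single vertex cannot render a finite cluster infinite, $x$ lies in the infinite cluster of $\mathcal{P}\cup\{x\}$ if and only if $B(x,r)$ already meets $\Hh(\mathcal P)$; by Slivnyak's theorem the residual configuration is again distributed as $\mathcal P$, so by translation invariance this probability equals $\theta_r$ for every $x$. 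Hence $\E[N_t]=\theta_r t^d$ exactly, which fixes the constant appearing in the statement.

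It remains to bound the fluctuations of $N_t$ around $\theta_r t^d$ at surface order, which is the heart of the matter and amounts to the continuum renormalisation of Penrose and Pisztora. I would tile $\mathbbm{B}(t)$ by cubes of a large fixed side $L$ and declare a cube \emph{good} when the configuration in its neighbourhood possesses a unique crossing cluster that connects to the crossing clusters of all adjacent good cubes, with the number of points it contributes lying in its typical range; the standard construction ensures that the crossing clusters of any connected family of good cubes merge into a single cluster contained in $\Hh$. The good/bad indicator field is finitely dependent, so by the continuum percolation estimates of Penrose one may choose $L$ large enough that the bad cubes are stochastically dominated (Liggett--Schonmann--Stacey) by a subcritical Bernoulli site percolation with parameter $p=p(L)$ as small as desired; in particular a fixed connected family of $m$ cubes is entirely bad with probability at most $p^{m}$.

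On the complementary event the $\Hh$-points inside $\mathbbm{B}(t)$ are, up to a boundary layer and an additive error controlled by the total number of points in bad cubes and in regions cut off from the infinite cluster, given by the sum of crossing-cluster sizes over good cubes. This bulk term is a sum of $\Theta((t/L)^d)$ bounded, finitely dependent contributions, so a concentration inequality for bounded finitely dependent variables keeps it within $\tfrac{\varepsilon}{2}\theta_r t^d$ of its mean with probability at least $1-\exp(-c\,t^d)$, a volume-order bound negligible against the claimed one. The remaining error can exceed $\tfrac{\varepsilon}{2}\theta_r t^d$ only if there is either a macroscopically large connected family of bad cubes or a surface of bad cubes separating a volume-order region from the infinite cluster, and any such structure contains $\gtrsim (t/L)^{d-1}$ bad cubes; a Peierls-type union bound over the $\exp(O((t/L)^{d-1}))$ candidate surfaces, each of probability at most $p^{(t/L)^{d-1}}$, then gives a contribution of order $\exp(-c' t^{d-1})$ once $p$ is small. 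Combining the two estimates and adjusting constants yields the proposition.

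The main obstacle is precisely the nonlocality of the event $\{x\in\Hh\}$: a single distant point can in principle reclassify many vertices inside the box, so $N_t$ is not a well-behaved local functional and a naive bounded-differences argument fails. The renormalisation circumvents this by replacing the global connectivity requirement with the local, finitely dependent notion of good cubes, and by showing that any macroscopic discrepancy between the local crossing clusters and the genuine infinite cluster forces a separating surface of bad cubes whose cost is exactly of surface order $t^{d-1}$. This is also the reason no rate better than $t^{d-1}$ can be expected, so the exponent in the statement is the natural one; alternatively, once the density has been identified as $\theta_r$, one may simply invoke the continuum form of Penrose--Pisztora's Theorem 1 directly.
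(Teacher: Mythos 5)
Your proposal is correct, and it matches the paper's treatment: the paper gives no argument of its own for this proposition, stating it as a weaker form of Theorem~1 of Penrose and Pisztora \cite{penrose1996}, which is precisely the renormalisation result (good/bad cubes, finite dependence, Peierls surface-order bound) that your sketch reconstructs and that you note can simply be invoked directly. The only content you add beyond the citation is the exact identification $\E|\Hh\cap\mathbbm{B}(t)|=\theta_r t^d$ via the Mecke equation, which is a correct and clean way to fix the centering constant.
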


\subsection{Control of the growth} 

The occupation of new sites in the competition model is characterized by first-passage percolation, as highlighted in Remark \ref{rmk:fpp.growth}. Consider a non-negative random variable $\tau$. On a realization of the random graph $\mathcal{G} = (V, \mathcal{E})$, assign independent random variables $\{\tau_e : e \in \mathcal{E}\}$ to the edges, all sharing the distribution of $\tau$. The first-passage time between two vertices \(u\) and \(v\) in $\Hh$ is defined as  
$T(u, v) = \inf_{\gamma} \sum_{e \in \gamma} \tau_e$, 
where the infimum is taken over all paths $\gamma$ connecting $u$ to $v$.  

To extend $T(\cdot, \cdot)$ to points in $\R^d$, assign each point $x \in \R^d$ to its nearest vertex $q(x)$ in $\Hh$ based on Euclidean distance. If multiple vertices are equidistant from $x$, resolve the ambiguity using a fixed, deterministic rule. For any $x, y \in \R^d$, define  
\[
T(x, y) = T(q(x), q(y)).
\]  
This extension produces a random metric on $\R^d$ and the random balls
\begin{equation*}
H_t(x):=\{y \in \R^d: T(x,y) \le t\},\quad t \ge 0,
\end{equation*}

As a shorthand, write $T(x)$ for $T(o, x)$ and $H_t$ for $H_t(o)$, where $o$ is the origin. Consider the following conditions on the passage time $\tau$:
\begin{itemize}
    \item[(${A}_1$)] \ltext{${A}_1$}{A1} $\p(\tau=0) =0$;
    \item[(${A}_2$)] \ltext{${A}_2$}{A2}
    $\E[e^{\eta \tau}] < +\infty$ for some $\eta>0$.
\end{itemize}

The assumptions stated earlier are instrumental in establishing the quantitative shape theorem for FPP on RGGs, as presented in Theorem 1.1 of \citet{delima2024speed}. For completeness, we restate it below:

\begin{theorem} \label{thm:speed.FPP}
	Let $d \geq 2$,~$\lambda > 0$ and $r>r_c(\lambda)$, and consider first-passage percolation on the random geometric graph on~$\R^d$ with parameters~$\lambda$ and~$r$, with passage times satisfying~\eqref{A1} and~\eqref{A2} above. 
Then, there exists $c'>0$ and $\upvarphi>0$ such that almost surely, for~$t$ large enough we have
    \begin{equation*}% \label{eq:asymptotic.cone2}
	    \left(1-c'\frac{\log(t)}{\sqrt{t}}{}\right) B(o, \upvarphi) \subseteq \frac{1}{t}H_{t} \subseteq \left(1+c'\frac{\log(t)}{\sqrt{t}}{}\right) B(o, \upvarphi).
    \end{equation*}
\end{theorem}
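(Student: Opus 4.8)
The statement is the quantitative (moderate--deviation) shape theorem, so the plan is to follow the route of Kesten and Alexander for lattice first-passage percolation, adapted to the random geometric graph, where one must simultaneously control the random edge weights $\{\tau_e\}$ and the random environment $\mathcal{P}$. The overall structure is: (i) identify the limiting shape as a ball, (ii) prove a pointwise moderate--deviation estimate for $T(o,x)$, and (iii) upgrade it to a uniform almost--sure statement at rate $c'\log(t)/\sqrt{t}$.

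First I would establish the limiting shape. The passage time is subadditive, $T(u,w)\le T(u,v)+T(v,w)$, and assumption \eqref{A2} supplies the integrability needed to apply a continuum subadditive ergodic theorem along each direction, yielding a time constant $\mu:=\lim_{n} T(o,nx)/(n\|x\|)$ that is independent of $x\neq o$ by the rotation invariance of the Poisson point process. Assumption \eqref{A1} rules out zero-weight shortcuts and therefore forces geodesics to use $O(\|x\|)$ edges, which guarantees $\mu>0$ and a genuinely nondegenerate shape; setting $\upvarphi:=1/\mu$ identifies the limit as the Euclidean ball $B(o,\upvarphi)$, as in \cite{coletti2023}.

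The core is a pointwise moderate--deviation estimate. I would show that there are constants $c,C>0$ such that, for $x$ with $\|x\|$ large and $u$ in the moderate range, one has
\[
\p\bigl(\,\lvert T(o,x)-\mu\|x\|\rvert>u\sqrt{\|x\|}\,\bigr)\le 2\exp(-c\,u^{2}),
\]
together with Alexander's nonrandom bound $0\le \E[T(o,x)]-\mu\|x\|\le C\sqrt{\|x\|}\log\|x\|$ on the convergence rate of the mean. Concentration about the mean follows from a bounded-differences / martingale argument (Azuma--Hoeffding or a Talagrand-type inequality), where the exponential moment \eqref{A2} controls each increment and \eqref{A1} together with the density bound of \cref{prop:Hn.growth} caps the number of edges in a geodesic; the deviation of the mean from $\mu\|x\|$ is handled by Alexander's approximation scheme. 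This step is the main obstacle: unlike on $\Z^{d}$, a geodesic can wander into a sparse region of $\mathcal{P}$, so one must first use \cref{prop:Hn.growth} to show that every box of side of order $\log\|x\|$ meets $\Hh$ and carries enough short connections, turning Euclidean displacements into comparable graph distances with overwhelming probability.

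Finally I would upgrade the pointwise bound to the uniform shape statement. Choosing $u=c''\log t$ makes the right-hand side $2\exp(-c\,(c'')^{2}(\log t)^{2})$, which is summable even after a union bound over a net of $O(t^{d-1})$ points on the sphere of radius $\upvarphi t$. Borel--Cantelli then gives, almost surely for all large $t$, that $\lvert T(o,y)-\mu\|y\|\rvert\le c'\,\sqrt{t}\,\log t$ simultaneously at all net points, and the Lipschitz modulus of $T$ (again controlled via \cref{prop:Hn.growth}, bounding passage times between nearby points) interpolates the estimate to every $y$ in the shell. Translating $T(o,y)\approx\mu\|y\|$ through $H_{t}=\{y:T(o,y)\le t\}$ and dividing by $t$ yields the two inclusions with error of order $c'\log(t)/\sqrt{t}$, which is exactly the asserted sandwich.
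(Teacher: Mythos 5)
The first thing to note is that the paper contains no proof of this statement at all: \cref{thm:speed.FPP} is imported verbatim as Theorem~1.1 of \citet{delima2024speed}, so the only meaningful comparison is with the strategy of that cited work, whose pointwise ingredient the paper records separately as \cref{thm_new_moderate_deviations}. Your three-step program --- subadditivity plus isotropy of the Poisson process to identify a Euclidean-ball limit with time constant $\mu=1/\upvarphi$ (as in \citet{coletti2023}), a pointwise moderate-deviation estimate for $T(o,x)$ in the Kesten--Alexander style, and then a net/union-bound/Borel--Cantelli upgrade with Lipschitz interpolation --- is exactly the architecture behind the cited result, so in outline your reconstruction is the right one.

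Two quantitative points in your step (ii) need correction, however. First, the sub-Gaussian tail $2\exp(-cu^{2})$ you claim is stronger than what is available (already on $\Z^{d}$, Kesten's classical bound at scale $u\sqrt{\|x\|}$ is exponential in $u$, not in $u^{2}$, because the martingale increments are not uniformly bounded under \eqref{A2}, and here the Poisson environment adds further unbounded randomness); the correct statement, which the paper imports as \cref{thm_new_moderate_deviations}, is
\begin{equation*}
\p\left(\frac{|T(x)-\|x\|/\upvarphi|}{\sqrt{\|x\|}}>\ell\right)\le Ce^{-c\ell}, \qquad \ell\in\left[C'\log\big(\|x\|\big),\sqrt{\|x\|}\right].
\end{equation*}
This weaker rate still closes your step (iii): with $\ell\asymp\log t$ the bound is $t^{-cc''}$, summable after the union bound over $O(t^{d-1})$ net points once $c''$ is large enough --- which is precisely why the admissible window begins at $C'\log\|x\|$. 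Second, \cref{prop:Hn.growth} is a volume estimate and cannot by itself cap the number of edges on a geodesic, guarantee ``enough short connections'' in boxes, or furnish the Lipschitz modulus you invoke for interpolation; Euclidean proximity must be converted into graph proximity by a chemical-distance comparison with exponential tails, e.g.\ Theorem~2.2 of \citet{yao2011}, which is exactly the input this paper uses as \eqref{eq:D.Euclid.Lm.Yao} in the proof of \cref{lm:control.invasion}. With that substitution, and with the caveat that the subadditive-ergodic step along rays requires the Palm/stationarity care carried out in \citet{coletti2023} (since $q(nx)$ is itself random, the sequence $T(o,nx)$ is not exactly stationary), your sketch matches the proof of the theorem as given in the cited source.
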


Next, we combine Theorems 1.2 and 1.3 from \citet{delima2024speed} to derive a result on the moderate deviations of first-passage times:

\begin{theorem}[Moderate deviations of first-passage times] \label{thm_new_moderate_deviations}
	Consider first-passage percolation as in Theorem~\ref{thm:speed.FPP}, under the same assumptions as in that theorem. There exist~$C,C', c > 0$ such that for any~$x \in \mathbb R^d$ with~$\|x\|$ large enough, we have
	\begin{equation*}
		\p\left(\frac{|T(x) - \|x\|/\varphi|}{\sqrt{\|x\|}} > \ell \right) \le Ce^{-c \ell} \quad \text{for any } ~\ell \in \left[C'\log\big(\|x\|\big),\sqrt{\|x\|}\right].
	\end{equation*}
\end{theorem}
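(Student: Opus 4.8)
The statement is meant to follow by combining the two one-sided deviation estimates of \citet{delima2024speed} (their Theorems~1.2 and~1.3), and the plan is simply to separate the upper and lower tails and then unify the resulting bounds. Writing $n := \|x\|$, I would start from the decomposition
\[
\left\{\frac{|T(x) - n/\varphi|}{\sqrt{n}} > \ell\right\}
= \left\{T(x) > \frac{n}{\varphi} + \ell\sqrt{n}\right\}
\cup \left\{T(x) < \frac{n}{\varphi} - \ell\sqrt{n}\right\},
\]
so that it suffices to bound the \emph{slow-down} event (first term) and the \emph{speed-up} event (second term) each by $\tfrac{C}{2}e^{-c\ell}$ and apply the union bound; each of the two cited theorems is expected to control exactly one of these tails.

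The core step is matching parametrizations. Each cited estimate is naturally phrased as $\p\big(\pm(T(x) - n/\varphi) > u\big) \le C_\pm \exp(-c_\pm u/\sqrt{n})$, valid for the deviation $u$ over an interval of the form $[a_\pm \sqrt{n}\log n,\ b_\pm\, n]$. Substituting $u = \ell\sqrt{n}$ turns each rate into $\exp(-c_\pm \ell)$ and each range into $\ell \in [a_\pm \log n,\ b_\pm \sqrt{n}]$; choosing $C' := \max\{a_+, a_-\}$, $c := \min\{c_+, c_-\}$, and $C := C_+ + C_-$ then yields $Ce^{-c\ell}$ on all of $[C'\log n, \sqrt{n}]$. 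Should the cited results instead appear as a concentration bound $\p(|T(x)-\E T(x)| > u) \le C\exp(-cu/\sqrt n)$ together with a convergence rate $|\E T(x) - n/\varphi| \le C_0\sqrt n\log n$, the same conclusion follows via the triangle inequality: taking $C' > C_0$ lets the deterministic bias be absorbed into half of $\ell\sqrt n$, which is exactly why the admissible range cannot descend below the $\sqrt n\log n$ fluctuation scale already visible in Theorem~\ref{thm:speed.FPP}.

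It remains to pass from vertices of $\Hh$ to arbitrary $x \in \R^d$: since $T(x) = T(q(x))$ by definition and the displacement $\|q(x) - x\|$ has exponentially small tails, so that it is $O(\log n)$ with overwhelming probability, replacing $\|x\|$ by $\|q(x)\|$ perturbs $n/\varphi$ by a negligible amount against $\ell\sqrt{n} \ge C'\sqrt{n}\log n$, and the bound transfers after adjusting constants. The step I expect to be the main obstacle is the speed-up (lower) tail: in first-passage percolation this is typically the delicate direction, and the crux is to verify that the cited estimate for it truly decays like $e^{-c\ell}$ across the \emph{entire} interval $[C'\log n, \sqrt{n}]$ — rather than only in a sub-range, or with a Gaussian-type exponent $e^{-c\ell^2}$ that would fail to be uniform near the large-deviation end $\ell \asymp \sqrt{n}$ — and to confirm that the two cited ranges overlap, leaving no gap in $\ell$.
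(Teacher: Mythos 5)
Your proposal matches the paper's approach: the paper gives no separate proof of this theorem, stating only that it is obtained by combining Theorems~1.2 and~1.3 of \citet{delima2024speed}, which is precisely the tail-splitting, reparametrization, and union-bound argument you describe. Your additional remarks on aligning the ranges of $\ell$ and on passing from $q(x)$ to arbitrary $x\in\R^d$ are sensible elaborations of what the paper leaves implicit, and raise no discrepancy with its argument.
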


Since the growth in the competition model can be described by  $\tau\sim \operatorname{Exp}(1)$, which satisfy \eqref{A1} and \eqref{A2}, the theorems above also apply to $T(u,v)$ and $H_t(u)$ for $u\in W\cup W'$. Furthermore, these results can be extended to the entire region defining $\upchi(t)$, which represents the entire growth of the competition model. This extension is supported by the polynomial volume associated with $W \cup W'$ (see \cref{prop:Hn.growth}). Specifically, $\upchi(t)$ is given by $q\left(\bigcup_{u\in W\cup W'}H_t(u)\right)$.

\section{Intermediate results of the competition model}\label{sec:intermediate.invasion}

In this section, we delve into the competition model presented in the introduction. Our focus is on exploring the dynamics and conditions under which two species can coexist within the framework of RGGs. Throughout the lemmas in this section, we assume the conditions of \cref{thm:coexistence}.

\subsection{Intermediate condition}

To investigate the coexistence of two species within this competition model over time, we will demonstrate that for a given sequence $t_n$ of time instances, it is possible to observe both species inhabiting specific regions of the space. This phenomenon is ensured by what we refer to as the \emph{intermediate condition}.

Let $\mathfrak{a}, \mathfrak{b} \in \left(\frac{3}{4}, ~1\right)$ be such that $\mathfrak{b}<2\mathfrak{a}-1$, and fix $\mathfrak{d} \in (0,1)$. Consider $\{t_n\}_{n\in\N_0}$ and $\{r_n\}_{n\in\N_0}$ to be sequences of times and angles, respectively, taking values in $(0,+\infty)$ such that, for all $n \in \N_0$, \[t_{n+1}-t_n = \mathfrak{d} t_n \quad \text{and} \quad r_n-r_{n+1}= (t_n)^{\mathfrak{a}-1}\] with $t_0>0$ to be determined by \eqref{intermediate.condition}. Then,  $t_n$ is given by $t_n :=t_0(1+\mathfrak{d})^n$ and we set \[r_n:=\frac{1+(1+\mathfrak{d})^{(\mathfrak{a}-1)\cdot n}}{1-(1+\mathfrak{d})^{(\mathfrak{a}-1)}}(t_0)^{\mathfrak{a}-1}.\]
Observe that $r_n$ decreases to $r_0/2>0$ as $n \uparrow +\infty$.

Fix $\bar{s} := \left(\frac{2(1+\mathfrak{d})}{\mathfrak{d}}\right)^{\frac{1}{1-\mathfrak{a}}}$ and let $\mathcal{A}(o,t,t')$ be the annulus $B(o,t')\setminus B(o,t)$. For $z \in \partial B(o,1)$ and $t_0 > \bar{s}$, define $\Upphi_n(z)$ as the random set of vertices in a region given by 
\[
    \Upphi_n(z) := \upvarphi\cdot\mathcal{A}\left(o, ~\tfrac{1}{1+\mathfrak{d}}t_n,~t_n-(t_n)^{\mathfrak{b}}\right) \cap \operatorname{Cone}(z, r_n)\cap \Hh.
\]
Let $\Uptheta_{w,w'}$ be the  event that guarantees the existence of vertices in regions of interest, defined as follows:
\[\Uptheta_{w,w'}:=\bigcap_{n\in\N_0} \big\{\Upphi_n(w)\neq\varnothing \ \text{ and } \ \Upphi_n(w')\neq \varnothing\big\}.\]
Consider $\widehat{H}$ to be the set $q^{-1}(H)$ for $H \subseteq \Hh$. Define $\Gamma_n$ as the event associated with convergence to the limiting shape, adjusted for $\upchi(0)$:
\[\Gamma_n :=\left\{B\left(o, \upvarphi(t_n - (t_n)^{\mathfrak{b}})\right) \subseteq \widehat{\upchi}_n \subseteq B\left(o, \upvarphi(t_n + (t_n)^{\mathfrak{b}})\right)\right\}.\]

Before stating the intermediate condition, we will restrict our attention to a specific part of the occupied sites $\upchi$. Note that, since $t_0>\bar{s}$, it follows that  $r_n \in(0, \pi/2)$ for $n\in \N_0$. Let us write, for all $n \in \N_0$, 
\[\upchi_{z,n}:=\upchi(t_n)\cap \left(\mathrm{Cone}(z,r_n)\left\backslash B\left(o,\tfrac{\upvarphi}{1+\mathfrak{d}}t_n\right)\right.\right).\] 

\begin{figure}[htb!]
    \centering
    \includegraphics[trim={70pt 70pt 70pt 70pt},clip,width=0.72\linewidth]{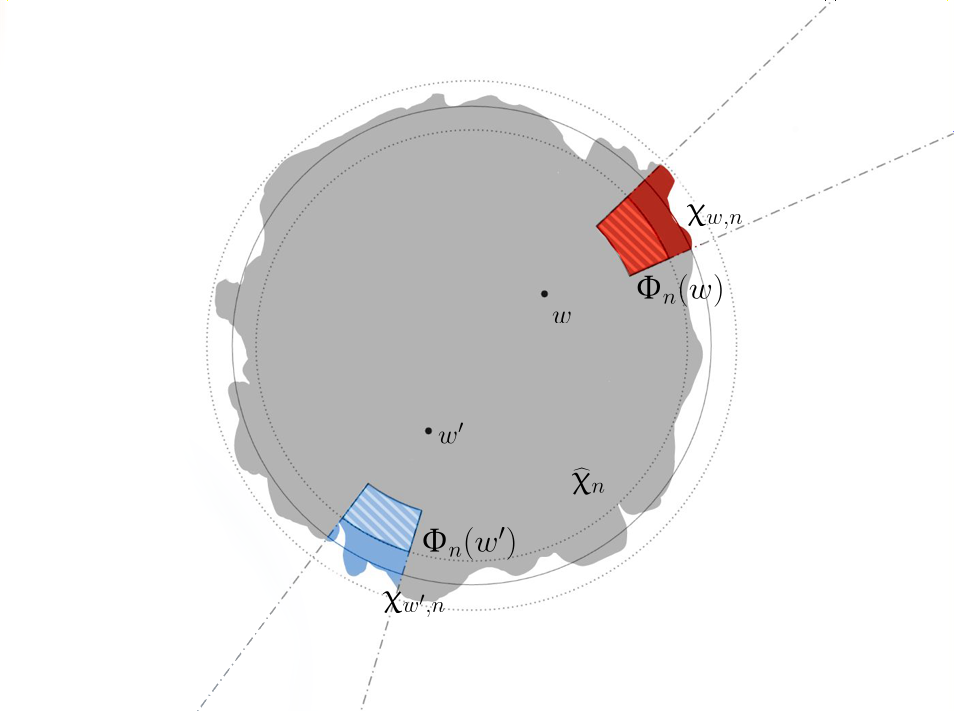}
    \caption{The regions $\upchi_{w,n}$, $\upchi_{w',n}$, $\Upphi_n(w)$, $\Upphi_n(w')$ on the event $\Gamma_n$.}
    \label{fig:comp_intermediate_condition}
\end{figure}

Now, we define $\Uppsi_{w,w'}$ by combining the events above, focusing on the occupation of selected regions of space by a unique species in each region. Specifically,
\[
    \Uppsi_{w,w'}:=\left\{
   \upchi_{w,0}\subseteq\upxi(t_0) \ \text{ and } \ \upchi_{w',0}\subseteq\upzeta(t_0)\right\} \cap \Uptheta_{w,w'} \cap \Gamma_0.
\]

Let the constant $s_0>\bar{s}$ be determined later (see \cref{lm:intermediate.condition} and the Proof of \cref{thm:coexistence} for details). Recall that $\uptheta(u,v)$ denotes the angle between $\vec{ou}$ and $\vec{ov}$. We can now state the intermediate condition for the competition model as follows: There exist $w,w' \in \partial B(o,1)$ and $t_0 \geq s_0$ such that $\uptheta(w,w')>2r_0$ and 
\begin{equation} \tag{\small \scshape I$_0$} \label{intermediate.condition}
    \p\big(\Uppsi_{w,w'}\big)>0.
\end{equation}

Furthermore, observe that $\Upphi_n(w) \subseteq\upchi_{w,n}$ and $\Upphi_n(w') \subseteq\upchi_{w',n}$ (see \cref{fig:comp_intermediate_condition}).

To ensure the occupancy of the regions highlighted above by the red and blue species, we define a \textit{growth-invasion path}. Consider ${\mathbb{G}=(\mathbb{V},\mathscr{E})}$ to be a connected graph, and let $\sigma \in \{0,1,2\}^{\mathbb{V}}$ be a configuration such that, for all $x \in \mathbb{V}$,
\[\sigma(x)=\left\{\begin{array}{cc}
     2, & x\text{ is blue};\\
     1, & x\text{ is red};  \\
     0, & x\text{ is unoccupied.}
\end{array}\right.\]

A \textit{finite growth-invasion path} is a sequence $\{\sigma\}_{i=0}^k$ of configurations in $\{0,1,2\}^{\mathbb{V}}$ where, for all $i \in \{1, \dots,k\}$, there exist a unique $x\in \mathbb{V}$ and an edge $\{x,y\}\in\mathscr{E}$ such that:
\begin{itemize}
    \item[(i)] $\sigma_i(x)-\sigma_{i-1}(x)\neq 0$, and
    \item[(ii)] $\sigma_i(x)=\sigma_{i-1}(y) \in\{1,2\}$.
\end{itemize}
In other words, the red and blue species grow by occupying unoccupied neighbouring vertices, and an invasion occurs when the neighbouring vertex is already occupied.

We now present a lemma that establishes the existence of a finite growth-infection path leading to a certain configuration of interest.

\begin{lemma} \label{lm:irreducibility}
    Let ${\mathbb{G}=(\mathbb{V},\mathscr{E})}$ be a finite connected graph , and suppose there exists $x\in\mathbb{V}$ with degree $\operatorname{deg}(x) \geq 3$. Then, for any configuration $\sigma_0 \in \{0,1,2\}^{\mathbb{V}}$ such that $\sigma_0^{-1}(\{1\})$ and $\sigma_0^{-1}(\{2\})$ are non-empty, there exists a finite growth-invasion path $\{\sigma_i\}_{i=0}^k$ such that: \[\sigma_k\in\{1,2\}^{\mathbb{V}} \quad \text{ and } \quad \sigma_k(x)\in\big\{\sigma_k(y)\colon y \text{ is a neighbour of } x\big\}.\]
\end{lemma}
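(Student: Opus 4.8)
The plan is to construct the required configuration in two phases — a pure \emph{growth} phase that occupies every vertex, followed by a short \emph{correction} phase that forces the agreement at $x$ — and then to verify that each elementary transition is a legal growth or invasion step in the sense of (i)--(ii).

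First I would run the growth phase. A growth step only assigns a colour to a vertex currently valued $0$ and never alters an already occupied vertex, so such steps can destroy neither the colour $1$ nor the colour $2$. Because $\mathbb{G}$ is finite and connected and $\sigma_0$ has at least one occupied vertex, whenever the current configuration still has a vertex valued $0$ there must be such a vertex adjacent to an occupied one — otherwise the occupied set would disconnect $\mathbb{G}$ — and occupying it by copying that neighbour's colour is a legal growth step. Repeating this greedily terminates, after finitely many steps, in a configuration $\rho \in \{1,2\}^{\mathbb{V}}$; since no occupied vertex is ever recoloured, both colours that were present in $\sigma_0$ remain present in $\rho$.

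Next comes the correction phase at $x$. If $\rho(x)$ already equals the colour of some neighbour of $x$, then $\rho$ is already a terminal configuration of the desired form. Otherwise, since $\rho$ is fully occupied and there are only two colours, every neighbour of $x$ carries the colour $c'$ opposite to $c:=\rho(x)$. In this case I perform a single invasion step on one neighbour $y_1$ of $x$, recolouring it from $c'$ to $c$; this is legal because $y_1\sim x$ and $\rho(x)=c\in\{1,2\}$, so it satisfies both (i) and (ii). After this step $x$ and $y_1$ share the colour $c$, so $x$ agrees with a neighbour, while the configuration stays fully occupied. This yields $\sigma_k\in\{1,2\}^{\mathbb{V}}$ with $\sigma_k(x)\in\{\sigma_k(y)\colon y\sim x\}$, as required, and the whole path is finite by construction.

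The part I would treat as the main obstacle is not the production of the agreement itself but the bookkeeping of which colours survive: forcing the agreement at $x$ must not silently eliminate one of the two species, which is exactly the feature needed for the coexistence application. This is precisely where the hypothesis $\deg(x)\geq 3$ is used. After recolouring $y_1$ to $c$, the remaining $\deg(x)-1\geq 2$ neighbours of $x$ still carry $c'$, so the colour $c'$ survives, while $c$ survives at $x$ (and $y_1$); hence $\sigma_k$ retains both species. Everything else is routine: each phase terminates after finitely many moves, and every transition is, by construction, either a valid growth step in the first phase or a valid invasion step in the second.
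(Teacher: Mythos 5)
Your argument is internally consistent and does establish the lemma as it is literally worded: greedy growth to full occupation followed by a single invasion at one neighbour of $x$ is a legal growth-invasion path ending in \emph{some} $\sigma_k\in\{1,2\}^{\mathbb{V}}$ with $\sigma_k(x)$ agreeing with a neighbour. However, this misses the content the lemma is actually required to carry. The paper's proof treats $\sigma_k$ as a \emph{prescribed} target (``Consider the configurations $\sigma_0$ and $\sigma_k$ fixed as stated above'') and shows that \emph{any} fully occupied configuration in which $x$ agrees with some neighbour is reachable from $\sigma_0$ --- an irreducibility statement, as the label suggests. That stronger form is what the application in Lemma~\ref{lm:intermediate.condition} needs: there the growth-invasion path must terminate in the specific configuration where $\upchi_{w,0}$ is entirely red and $\upchi_{w',0}$ is entirely blue, not merely in some fully occupied state. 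Your construction reaches only a very restricted family of final configurations (those obtainable by monotone growth from $\sigma_0$ plus one flip at a neighbour of $x$), so it cannot be substituted where the lemma is invoked.

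A telling symptom is your use of the hypothesis $\operatorname{deg}(x)\geq 3$: you invoke it only to keep both colours alive after the final invasion, but the conclusion as written does not ask for both colours to survive, so your proof of the literal statement would go through with $\operatorname{deg}(x)\geq 1$. In the paper the degree hypothesis is load-bearing in a different way: one builds a spanning tree of $\mathbb{G}$ that preserves the $\operatorname{deg}(x)\geq 3$ branches at $x$, paints two branch roots $x_1,x_2$ with the two colours, and uses $x$ as a relay to shuttle either colour down each remaining branch in turn, driving every vertex to its prescribed target colour; the final claw on $x,x_1,x_2,x_3$ is what allows $x$ itself to receive its required colour without destroying a colour still needed elsewhere. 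To repair your proposal you would need to replace the greedy growth phase by a reachability argument of this kind.
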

\begin{proof}
    To prove the existence of a finite growth-invasion path, which may not necessarily be optimal, we analyse a subgraph structure of $\mathbb{G}$. Consider the graph $\mathbb{G}$ with the vertex $x$ removed. The resulting graph is decomposed into at most $\operatorname{deg}(x)$ connected components. Next, fix a spanning tree for each connected component, and let $\{x_j\}_{j=1}^{\operatorname{deg}(x)}$ denote the set of neighbours of $x$ in $\mathbb{G}$.

    If more than one neighbour of $x$ belongs to the same spanning tree, we cut the branch at one of these neighbours, ensuring that we obtain exactly $\operatorname{deg}(x)$ distinct trees. Denote these trees as $\mathbb{T}_j$, where each $\mathbb{T}_j$ is rooted at a different neighbour $x_j$ of $x$.

    Now, let $\mathbb{T}$ be the graph formed by the union of the trees $\mathbb{T}_1, \dots, \mathbb{T}_{\operatorname{deg}(x)}$ together with the star graph with centre vertex $x$ and extremities $\{x_j\}_{j=1}^{\operatorname{deg}(x)}$. Hence, $\mathbb{T}$ is a spanning tree of $\mathbb{G}$ that preserves the degree of $x$. 

    Consider the configurations $\sigma_0$ and $\sigma_k$ fixed as stated above. Suppose $\sigma_0 \in \{0,1,2\}^{\mathbb{V}}$ with non-empty $\sigma_0^{-1}(\{1\})$ and $\sigma_0^{-1}(\{2\})$. Observe that one can easily construct a finite growth-invasion path $\{\sigma_i\}_{i=0}^{k'}$ such that $\sigma_{k'}(x_1) = 1$ and $\sigma_{k'}(x_2) = 2$.

    \begin{figure}[htb!]
        \centering
        \includegraphics[width=0.35\linewidth]{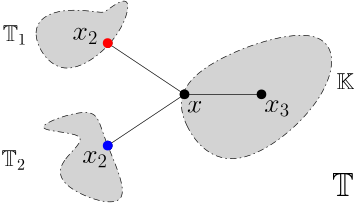}
        \caption{The spanning tree $\mathbb{T}$ of $\mathbb{G}$ and its subgraphs.}
        \label{fig:comp_T}
    \end{figure}

    Let $\mathbb{K}$ denote the subtree of $\mathbb{T}$ rooted at $x$ obtained by removing the vertices of $\mathbb{T}_1$ and $\mathbb{T}_2$. We construct a finite growth-invasion path $\{\sigma_i\}_{i=k'}^{k''}$ with the property that for any vertex $y$ in $\mathbb{K}$, $\sigma_{k''}(y) = \sigma_k(y)$. This is done by alternating growth-invasion paths between $x_1$ and $x_2$ through $x$, reaching all the leaves and branches of $\mathbb{K}$.

    If necessary, we repeat the procedure for the trees $\mathbb{T}_1$ and $\mathbb{T}_2$ by assigning the colours red and blue to $x_3$, respectively. At this stage, we obtain a growth-invasion path $\{\sigma_i\}_{i=0}^{\overline{k}}$ with the property that $\sigma_{\overline{k}}(y) = \sigma_k(y)$ for all $y \in \mathbb{V} \setminus \{x, x_1, x_3\}$. Finally, we alternate the colours of $x_1$, $x_2$, and $x_3$ in a growth-invasion path $\{\sigma_i\}_{i=\overline{k}}^{k}$ within the claw graph formed by $x$, $ x_1$, $x_2$, and $x_3$, where the last step may involve the final change of colour of $x$, adopting the colour of one of its neighbours.

\end{proof}

We will show that we can control the coexistence of both species within suitable regions of space given by the intermediate condition \eqref{intermediate.condition}. The following lemma ensures that the intermediate condition is reasonable for any viable finite initial configuration of the competition model.

\begin{lemma} \label{lm:intermediate.condition}
    Let $W,W'$ be two non-empty disjoint sets of $~\R^d$ determining a viable finite initial configuration of the competition model. Then, for all $\varepsilon\in(0,1)$ and any fixed $w,w'\in \partial B(o,1)$, there exists $s_0 > \bar{s}$ such that, for all $t_0 \geq s_0$,
    \[\p\left(\Uptheta_{w,w'} \cap \Gamma_0\right)>1-\varepsilon \quad \text{and} \quad \p\big(\Uppsi_{w,w'}\big) >0.\]
\end{lemma}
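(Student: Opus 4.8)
The plan is to treat the two inequalities separately. The first is a purely geometric and FPP estimate showing that $\Uptheta_{w,w'}\cap\Gamma_0$ is a high-probability event once $t_0$ is large, while the second additionally invokes the combinatorial construction of \cref{lm:irreducibility} together with the fact that, restricted to a bounded region, the competition process is a finite-state continuous-time Markov chain whose finite jump sequences all carry strictly positive probability.

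For the first inequality I would bound the two complementary events. To control $(\Uptheta_{w,w'})^c$, observe that for $z\in\{w,w'\}$ the region $\upvarphi\cdot\mathcal{A}(o,\tfrac{1}{1+\mathfrak{d}}t_n,t_n-(t_n)^{\mathfrak{b}})\cap\operatorname{Cone}(z,r_n)$ has radial thickness of order $\upvarphi t_n\frac{\mathfrak{d}}{1+\mathfrak{d}}$ and aperture $r_n\ge r_0/2$, hence contains a Euclidean ball, and therefore a hypercube, of side comparable to $t_n$. By translation covariance of $\Hh$ and the lower bound in \cref{prop:Hn.growth}, the probability that $\Hh$ misses such a hypercube is at most $\exp(-\mathtt{c}\,t_n^{d-1})$, so $\p(\Upphi_n(z)=\varnothing)\le\exp(-\mathtt{c}\,t_n^{d-1})$. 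A union bound over $n$ and over $z\in\{w,w'\}$ then gives $\p((\Uptheta_{w,w'})^c)\le 2\sum_{n\ge0}\exp(-\mathtt{c}\,t_0^{d-1}(1+\mathfrak{d})^{n(d-1)})$, which is dominated by its first term and hence smaller than $\varepsilon/2$ once $t_0\ge s_0$. To control $\Gamma_0^c$, I would use \cref{thm_new_moderate_deviations}: since $\upchi(t_0)=q(\bigcup_{u\in W\cup W'}H_{t_0}(u))$ and $W\cup W'$ lies in a bounded region, $\Gamma_0$ fails only if some point $x$ near the boundary of $B(o,\upvarphi t_0)$ has first-passage time deviating from $\|x\|/\upvarphi$ by more than order $t_0^{\mathfrak{b}}$; as $\mathfrak{b}>1/2$ this deviation lies in the moderate-deviation range, so each point contributes at most $Ce^{-c\,t_0^{\mathfrak{b}-1/2}}$, and a union bound over an $O(t_0^{d-1})$-net of the sphere yields $\p(\Gamma_0^c)\le C t_0^{d-1}e^{-c\,t_0^{\mathfrak{b}-1/2}}<\varepsilon/2$ for $t_0$ large. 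Combining, $\p(\Uptheta_{w,w'}\cap\Gamma_0)\ge 1-\p((\Uptheta_{w,w'})^c)-\p(\Gamma_0^c)>1-\varepsilon$.

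For the second inequality I would fix $w,w'$ with disjoint cones $\operatorname{Cone}(w,r_0)$ and $\operatorname{Cone}(w',r_0)$, so that $\upchi_{w,0}$ and $\upchi_{w',0}$ are disjoint and the requested colouring is consistent. Condition first on a realisation of $\Hh$ on which $\Uptheta_{w,w'}$ holds and which contains a vertex of degree at least $3$ inside $B(o,\upvarphi(t_0+t_0^{\mathfrak{b}}))$; this occurs with positive probability, the degree condition holding almost surely for $t_0$ large. On such an environment the dynamics up to time $t_0$, restricted to the finite vertex set $\Hh\cap B(o,\upvarphi(t_0+t_0^{\mathfrak{b}}))$, is a finite-state continuous-time Markov chain. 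By viability the initial colouring exhibits both species with positive probability, and I would then apply \cref{lm:irreducibility} to the connected occupied graph, with target configuration equal to red on $\upchi_{w,0}$, blue on $\upchi_{w',0}$, and arbitrary elsewhere, choosing the filled vertex set so that the occupied region at time $t_0$ lies between $B(o,\upvarphi(t_0-t_0^{\mathfrak{b}}))$ and $B(o,\upvarphi(t_0+t_0^{\mathfrak{b}}))$, i.e.\ so that $\Gamma_0$ holds. The lemma produces a finite growth-invasion path from the initial to the target configuration; each of its transitions has strictly positive rate, so the probability that the Markov chain follows exactly this ordered sequence of jumps, all occurring within $[0,t_0]$ with no further jump before $t_0$, is strictly positive. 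Multiplying the positive probabilities of the environment, of the viable start, and of this trajectory shows $\p(\Uppsi_{w,w'})>0$, which is \eqref{intermediate.condition}.

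The main obstacle is the second inequality, specifically realising $\Uptheta_{w,w'}$, $\Gamma_0$, and the monochromatic colouring of the two cones as a single positive-probability event in the coupled growth-and-invasion dynamics. The delicate point is that growth and invasion share the same driving rates, so the prescribed growth-invasion path must simultaneously reproduce an FPP-type occupied region compatible with $\Gamma_0$ and the colour pattern demanded by $\Uppsi_{w,w'}$; reducing everything to a finite Markov chain and transferring \cref{lm:irreducibility} into a positive-probability statement about jump sequences is where the care is needed. The first inequality, by contrast, is a routine concentration argument built on \cref{prop:Hn.growth} and \cref{thm_new_moderate_deviations}.
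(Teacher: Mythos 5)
Your first inequality is handled essentially as in the paper: the non-emptiness of the sets $\Upphi_n(z)$ follows from \cref{prop:Hn.growth} plus a union bound over $n$, and $\Gamma_0$ from \cref{thm_new_moderate_deviations} (the paper simply cites \cref{thm:speed.FPP,thm_new_moderate_deviations,prop:Hn.growth} and takes $s_0$ large; your quantitative sketch, with $\ell\sim t_0^{\mathfrak b-1/2}$ in the moderate-deviation window, is the right instantiation). For the second inequality your route is genuinely different from the paper's. You prescribe the \emph{entire} trajectory of the finite-state chain on $\Hh\cap B(o,\upvarphi(t_0+t_0^{\mathfrak b}))$ up to time $t_0$, forcing a target configuration that realises $\Gamma_0$ and the two monochromatic cones in one shot. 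The paper instead only prescribes the dynamics on the union $\mathcal W$ of $T$-geodesic trees joining $\upxi(0)$, $\upzeta(0)$ to the enlarged sectors $\Upphi_0^+(w)$, $\Upphi_0^+(w')$, forbids invasion of $\mathcal W$ from outside, and lets the FPP growth elsewhere run freely so that $\Gamma_0$ holds by the shape theorem; the bulk of its effort goes into the degenerate case where $\mathcal W$ is connected and could be a line graph, which it excludes by introducing $\Upphi_0^-(z)$ and the event $\Upsigma_{w,w'}$ guaranteeing degree-$\geq 3$ pivot vertices \emph{inside} each target sector with all their neighbours in that sector. Your approach buys simplicity (any finite admissible jump sequence of a CTMC with positive rates has positive probability, so no case analysis on $\mathcal W$), at the cost of having to prescribe a much larger event; the paper's buys a cleaner separation between the "free" FPP growth and the small controlled region.

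Two points you gloss over, and which are exactly where the paper does its extra work. First, \cref{lm:irreducibility} requires the graph it is applied to to be \emph{connected} and to contain a degree-$\geq 3$ vertex \emph{of that graph}, and its terminal configuration must satisfy $\sigma_k(x)\in\{\sigma_k(y):y\sim x\}$ at the pivot $x$; you must therefore (i) check that a target occupied set sandwiched between $B(o,\upvarphi(t_0-t_0^{\mathfrak b}))$ and $B(o,\upvarphi(t_0+t_0^{\mathfrak b}))$, containing $\upchi(0)$, can be chosen to induce a connected subgraph of $\Hh$ (the restriction of $\Hh$ to a ball need not be connected, so this is a further high-probability environment event), and (ii) place the pivot where your prescribed colouring lets it agree with a neighbour — harmless for a locally monochromatic target, but it should be said. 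Second, on a line graph the reachable terminal colourings from a given $\sigma_0$ are genuinely restricted (one colour can be irreversibly squeezed out), which is precisely why the paper cannot dispense with the degree-$3$ condition and why its event $\Upsigma_{w,w'}^c$ appears; your version avoids this only because you assume a degree-$3$ vertex inside the region, so make that assumption do the work explicitly when invoking \cref{lm:irreducibility}. With these two points patched, your argument is a valid alternative proof.
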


\begin{proof}
    First, let us fix, without loss of generality, an arbitrary $w \in\partial B(o,1)$ and let $w'=-w$. Define $\Gamma$ to be the event $\bigcap_{n\in\N}\Gamma_n$.
    Since $W$ and $W'$ are in a finite region of $\R^d$, \cref{thm:speed.FPP,thm_new_moderate_deviations,prop:Hn.growth} ensure that we can approximate $\p(\Gamma \cap \Uptheta_{w,w'})$ to 1 as closely as we want as $s_0 \uparrow+\infty$. Let $s_0$ be large so that $\p(\Gamma^c \cup \Uptheta_{w,w'}^c) < \varepsilon$.

    It remains to verify that $\upchi_{w,0}\subseteq\upxi(t_0)$ and $\upchi_{w',0}\subseteq\upzeta(t_0)$ occur with positive probability. Define
    \[
    \Upphi_n^+(z) := \upvarphi\cdot\mathcal{A}\left(o, ~\tfrac{1}{1+\mathfrak{d}}t_n,~t_n+(t_n)^{\mathfrak{b}}\right) \cap \operatorname{Cone}(z, r_n)\cap \Hh.
\]

    Note that $\Upphi_n(z) \subseteq \upchi_{z,n} \subseteq \Upphi_n^+(z)$ on $\Gamma_n$. Consider $\upxi(0)$ and $\upzeta(0)$ to be non-empty. Let $\mathcal{W}_{\upxi,w}$ to be a connected subgraph of $\Hh$ that connects $\upxi(0)$ to $\Upphi_0^+(w)$ with $T$-geodesic paths. Similarly, define $\mathcal{W}_{\upzeta,w'}$ to be a subgraph connecting $\upzeta(0)$ to $\Upphi_0^+(w')$.

    If $\mathcal{W}_{\upxi,w}$ and $\mathcal{W}_{\upzeta,w'}$ do not intersect, then $\upchi_{w,0}\subseteq\upxi(t_0)$ and $\upchi_{w',0}\subseteq\upzeta(t_0)$ happens with positive probability by forbidding the invasion dynamics from occurring  with the elements of $\mathcal{W}_{\upxi,w}$ and $\mathcal{W}_{\upzeta,w'}$ before time $t_0$. Let $\mathcal{W}$ be the graph union of $\mathcal{W}_{\upxi,w}$ and $\mathcal{W}_{\upzeta,w'}$. Consider now that case where $\mathcal{W}$ is connected.

    The worst case scenario is when $\mathcal{W}$ is a line graph that can prevent a growth-invasion path from $\upxi(0)$ and $\upzeta(0)$ to $\upchi_{w,0}$ and $\upchi_{w',0}$, respectively (see \cref{fig:comp_line}). The referred paths can be obtained using Harris' graphical construction along with the growth-invasion dynamics where each step (or arrow) happens with distribution $\operatorname{Exp}(1)$.

    \begin{figure}[htb!]
        \centering
        \includegraphics[width=0.5\linewidth]{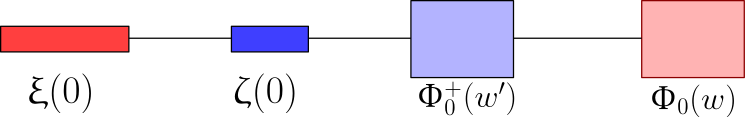}
        \caption{Schematics for worst case of the growth-invasion dynamics in $\mathcal{W}$.}
        \label{fig:comp_line}
    \end{figure}

    To verify the claim above, consider that there exists $x \in \mathcal{W}$ with $\operatorname{deg}(x)\geq 3$. Let us fix the random set
    \[
        \Upphi_0^-(z) := \upvarphi\cdot\mathcal{A}\left(o, ~\tfrac{1}{1+\mathfrak{d}}t_n + r/\varphi,~t_n-(t_n)^{\mathfrak{b}}-r/\varphi\right) \cap \operatorname{Cone}(z, r_n/2)\cap \Hh.
    \]
    
    Consider $\Upsigma_{w,w'}$ to be the event that every site in $\Upphi^-_0(w) \cup \Upphi^-_0(w')$ has degree at most $2$. Due to the properties of the homogeneous Poisson point process, one can choose $s_0> \bar{s}$ so that, for $t_0 \geq s_0$, the probability of $\Upsigma_{w,w'}$ occurring can be made arbitrarily small. Let $E_{w,w'}:= \Uptheta_{w,w'}\cap\Gamma\cap\Upsigma_{w,w'}^c$. Then, the subgraph $\mathcal{W}$ on $E_{w,w'}$ contains at least one vertex $x \in \upchi_{w,0}$ and one vertex $y \in \upchi_{w',0}$, each with degree greater than 2, such that all neighbors of $x$ are in $\upchi_{w,0}$ and all neighbors of $y$ are in $\upchi_{w',0}$.
    
    Observe that \cref{lm:irreducibility} guarantees the existence of a finite growth-invasion path $\{\sigma\}_{i=0}^k$ in $\mathcal{W}$ conditioned on $E_{w,w'}$. Note that the length $k$ of the path is random, depending on the realization of $\mathcal{W}$. By applying \cref{prop:Hn.growth}, one can identify a suitable event that bounds the size of $\mathcal{W}$, and consequently the length $k$.
    
    Furthermore, since each step of the finite growth-invasion path $\{\sigma_i\}_{i=0}^k$ corresponds to an independent exponential random variable with rate 1, occurring within the time interval $(0,t_0]$, and because species outside of $\mathcal{W}$ are forbidden from invading or occupying sites within $\mathcal{W}$, we obtain the following probability bound:
    \begin{equation*}
        \p(\Uppsi_{w,w'} \cap E_{w,w'}\mid\upxi(0)\neq\varnothing \text{ and }\upzeta(0)\neq \varnothing)>0.
    \end{equation*}
    This leads to the desired conclusion.
\end{proof}

\subsection{Control of invasion times}

One of the key aspects of the competition model is controlling the invasion dynamics within the graph. We will utilize its correspondence to random walks to derive the necessary bounds that enable effective control of the invasion.

Consider $S^u_t$ to be a continuous-time simple random walk on a graph ${\mathbb{G}=(\mathbb{V},\mathscr{E})}$ with . Let $\deg_{\max}(\mathbb{G})$ be the maximum vertex degree of $\mathbb{G}$ and denote by $D_{\mathbb{G}}(u, v)$ the graph distance between $u, v \in \mathbb{V}$. The following lemma is a straightforward consequence of Corollary 11 of \citet{davies1993} by using heat kernel techniques (see \cite{barlow2017} for details).

\begin{lemma} \label{lm:heat.kernel} 
    Let $\mathbb{G}$ be a graph with bounded vertex degree, then
        \[\p(S^u_t=v) \leq \operatorname{deg}_{\max}(\mathbb{G})\cdot\exp\left(-\frac{D_{\mathbb{G}}(u,v)^2}{2t} \left(1-\frac{D_{\mathbb{G}}(u,v)^2}{10t^2}\right)\right).\]
\end{lemma}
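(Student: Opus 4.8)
The plan is to deduce \cref{lm:heat.kernel} from the Gaussian-type upper bound on the continuous-time heat kernel for reversible Markov chains, as recorded in Corollary~11 of \citet{davies1993}. The key observation is that a continuous-time simple random walk $S^u_t$ on $\mathbb{G}$ is a reversible Markov chain whose transition probabilities $p_t(u,v) = \p(S^u_t = v)$ are exactly the heat kernel associated with the (normalised) graph Laplacian. Davies' estimate provides a bound of the form $p_t(u,v) \le C\,\exp\!\big(-\tfrac{D_{\mathbb{G}}(u,v)^2}{2t}\,\xi(D_{\mathbb{G}}(u,v)/t)\big)$, where the correction factor $\xi$ captures the discrete (lattice-type) nature of the walk; the factor $\operatorname{deg}_{\max}(\mathbb{G})$ will absorb the prefactor coming from the reversing measure, which here is the degree measure. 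Since $\mathbb{G}$ has bounded vertex degree by hypothesis, this measure is comparable to counting measure with constants controlled by $\operatorname{deg}_{\max}(\mathbb{G})$, which is what makes the clean stated form possible.

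The steps, in order, would be as follows. First I would fix $u,v \in \mathbb{V}$ and recall that $p_t(u,v)$ satisfies detailed balance with respect to the degree measure $m(x) = \operatorname{deg}(x)$, so that $m(u)p_t(u,v) = m(v)p_t(v,u)$; this is the reversibility input that Davies' corollary requires. Second, I would invoke the off-diagonal Gaussian upper bound from \citet{davies1993} (see also the heat-kernel exposition in \cite{barlow2017}), which for a continuous-time chain of bounded geometry yields, after the standard optimisation over the Davies parameter, an estimate whose exponent is $-\tfrac{D_{\mathbb{G}}(u,v)^2}{2t}$ multiplied by a factor of the form $\big(1 - \tfrac{D_{\mathbb{G}}(u,v)^2}{10 t^2}\big)$. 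Third, I would collect the prefactors: the ratio $m(v)/m(u)$ and any universal constant from the corollary are bounded above by $\operatorname{deg}_{\max}(\mathbb{G})$, giving the stated multiplicative constant. Putting these together yields exactly the displayed inequality.

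The main obstacle I expect is matching the precise shape of the correction factor $\big(1 - D_{\mathbb{G}}(u,v)^2/(10 t^2)\big)$ to the output of Davies' corollary. Davies' bound is usually stated as $\exp(-D^2/(2t)\cdot \psi)$ where $\psi$ arises from the Legendre-type transform of the discrete distance functional, and extracting the explicit constant $10$ requires either a careful reading of how the correction is normalised in Corollary~11 or an elementary convexity/Taylor estimate bounding the sharp correction from below by $1 - D^2/(10t^2)$ in the relevant regime. This is a bookkeeping issue rather than a conceptual one: the Gaussian leading order is standard, and the only genuine work is verifying that the particular numerical constant and the bounded-degree prefactor are consistent with the cited corollary. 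Since we only ever apply the lemma as an upper bound in a regime where $D_{\mathbb{G}}(u,v)$ is large relative to $t$, any looser admissible constant in place of $10$ would suffice, so I would not over-optimise this point.
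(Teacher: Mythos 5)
Your proposal is correct and follows essentially the same route as the paper, which offers no written proof but simply cites Corollary~11 of \citet{davies1993}: that corollary already delivers the Gaussian bound with the exact correction factor $\bigl(1-D_{\mathbb{G}}(u,v)^2/(10t^2)\bigr)$, so the only work is absorbing the reversing-measure prefactor into $\operatorname{deg}_{\max}(\mathbb{G})$, exactly as you describe. The ``obstacle'' you flag about extracting the constant $10$ is not an issue, since that constant appears verbatim in the cited corollary.
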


We apply this result in the proof of the lemma below. First, we define the random ball $\bar{B}_D(x, s):=\{y\in\Hh \colon D(x,y) \leq s\}$ where $D(u,v)$ is the random graph distance between $q(u)$ and $q(v)$. The following result bounds the probability that a region occupied by one species will be invaded within a given time, in relation to its volume.

\begin{lemma} \label{lm:control.invasion}
    Let $x \in \R^d$ and $\rho > 0$ with $B(x,\rho^{\mathfrak{b}}) \cap \Hh \subseteq \upxi(0)~$. Then there exist constants $C,C'>0$ such that, for all $t \in [0,\rho]$,
    \begin{equation} \label{eq:invasion.q(x)}
        \p\big(q(x) \in \upzeta (t)\big) \leq C \exp\left( -C' ~\rho^{2{\mathfrak{b}}-3/2} \right).
    \end{equation}
\end{lemma}
\begin{proof}
     It suffices to consider by stochastic domination that $\upxi(0) = B(x,\rho^{\mathfrak{b}}) \cap \Hh$ and $\upzeta(0) = \Hh\setminus \upxi(0)$. Recall that the dual process of the voter model is the coalescent random walk. Therefore, one can prove the lemma verifying properties of random walks on the infinite component. Let us denote by $S_t^{x'}$ a continuous-time simple random walk on $\Hh$ starting at $q(x')$. Set 
     \[\uptau^x_s := \inf \{t'>0 : S_{t'}^x \not \in B(x,s^{\mathfrak{b}})\}.\]
     We will verify \eqref{eq:invasion.q(x)} applying the following claim.
     
    \begin{claim} \label{cl:first.exit}
         There exist $c_1',c_2'>0$ such that
        \begin{equation} \label{eq:first.exit.p.bound}
            \p\left(\uptau^x_\rho < t\right) \leq c_1' \exp\left( -c_2' ~\rho^{2{\mathfrak{b}}-1} \right).
        \end{equation}
    \end{claim}
    \begin{proof} \renewcommand\qedsymbol{$\blacksquare$}
    Consider w.l.o.g. that $\rho \geq 1$, we will treat the case $\rho <1$ separately. In order to verify \eqref{eq:first.exit.p.bound}, we will first study $S_t^x$ on events with suitable properties for $\Hh$. The strategy is to control the distribution of points and vertex degrees within a given region of the graph.
    
    By Theorem 2.2 of \cite{yao2011} and Palm calculus, there exist $c'_0>1$ and $c''_1,c''_2>0$ such that, for all $s\geq1$,
    \begin{equation} \label{eq:D.Euclid.Lm.Yao}
        \p\big(B(x,s)\cap\Hh \not\subseteq \bar{B}_D(x, c'_0s)\big) \leq c''_1\exp(-c''_2 s).
    \end{equation}
    Set
    \[
        E_1 := \left\{\|q(x)-x\|<\rho^{\mathfrak{b}}/2,~ B(x,\rho^{\mathfrak{b}})\cap\Hh \subseteq \bar{B}_D(x, c'_0~\rho^{\mathfrak{b}}) \right\}.
    \]
    Let $c'':= \frac{2e+1}{2}c_0'r$ and set $\widetilde{\mathbbm{B}}(x,\rho) := x+c''\rho^2[-1,1]^d$ and denote by $\deg(u)$ the degree of the vertex $u \in \Hh$. Write $\Hh_{x,\rho}$ for the subgraph of $\Hh$ restricted to the vertices $\widetilde{\mathbbm{B}}(x,\rho) \cap \Hh$. %and let $\deg_{\max}(\mathcal{G})$ be the maximum vertex degree of a graph $\mathcal{G}$. 
    Define 
    \[
        E_2 := \big\{\deg_{\max}(\Hh_{x,\rho}) \leq \sqrt{\rho}~\big\}.
    \]
    Note that $\widetilde{\mathbbm{B}}(x,\rho)$ can be embedded in a partition with $\left\lceil \frac{2e+1}{4}c_0'\rho^2\right\rceil^d$ hypercubes whose side has length $2r$. We obtain an upper bound for $\p(E_2^c)$ by considering the event in which the PPP assigns more than $\sqrt{\rho}/3^d$ points to at least one of the above-referred hypercubes. By Chernoff bound, if $X \sim \operatorname{Poi}(\lambda')$ with respect to $\mathbf{P}$, then $\mathbf{P}(X \geq s) \leq \exp\left(\lambda'(e^{s'}-1)-s's\right)$. Let $s'=\log(s/\lambda')>0$ in the previous inequality, then $\mathbf{P}(X \geq s)< (e \cdot \lambda'/s)^s$ for all $s>0$. Hence,
    \begin{equation} \label{eq:p.E2.comp.bound}
        \p\big(E_2^c\big) \leq (3c_0')^d \rho^{2d} \left( \frac{e \cdot 6^d r^d }{\sqrt{\rho}}\right)^{\sqrt{\rho}}.
    \end{equation}

    It is a well-known fact that $\deg_{\max}(\Hh)$ is $\p$-a.s. unbounded. Let $S_t^{x,\rho}$ stand for $S_t^x$ restricted to $\Hh_{x,\rho}$. Denote by $\mathsf{N}^{x}_\rho$ the number of jumps performed by $S^{x,\rho}_t$ up to time $\rho$. Let $\mathsf{N}(\rho)$ be the counting of the homogeneous PPP on $[0,\rho]$ with rate $\sqrt{\rho}$ such that $\mathsf{N}^x_\rho \le \mathsf{N}(\rho)$ on $E_2$. 
    
    Define $E_3 := \{\mathsf{N}(\rho) < \rho^2 \cdot e\}$ and let $E := E_1\cap E_2 \cap E_3$. Then, by the same inequality used above for the Poisson distributions, one has that
    \begin{equation} \label{eq:p.E3.c.bound}
        \p(E_3^c) \leq \rho^{-\rho^2/2}.
    \end{equation}
    Observe that, since $B(x, \rho^{\mathfrak{b}}/2+r\cdot e\cdot c_0' \rho^2 +r) \subseteq \widetilde{\mathbbm{B}}(x,\rho)$, it follows that $S_t^x = S_t^{x,\rho}$ on $E$ for all $t \in [0, \rho]$. We will estimate
    \begin{equation} \label{eq:St.ball.bound}
        \p\left(S_t^x \not \in B\left(x,\frac{\rho^{\mathfrak{b}}}{2}\right)\right) \le \p\left(\left\{S_t^{x,\rho} \not \in \bar{B}_D\left(x,\frac{c_0'}{2}\rho^{\mathfrak{b}}\right)\right\} \cap E_1 \cap E_2 \right) + \p(E^c).
    \end{equation}
    
    By \cref{lm:heat.kernel}, one has that, for all $u,v \in \Hh_{x, \rho}$,
    \[
        \p(S_t^{u,\rho}=v \mid E_1\cap E_2) \leq \deg_{\max}(\Hh_{x,\rho})\mathsf{q}_t\big(q(x),v\big)
    \]
    with
    %where $\mathsf{q}_t(u,v)$ is the heat kernel {\color{red}(see \citet{barlow2017} for more details). We apply Corollary 11 of \citet{davies1993} to verify that $k(q(x),v) \leq \deg_{\max}(\Hh_{x,\rho})$} and that there exist $c_3,c_4>0$ such that
    %\[
        %\mathsf{q}_t\big(q(x),v\big) \le c_3\rho^{\mathfrak{b}} \exp\left( - c_4\frac{D\big(q(x),v\big)^2}{t\sqrt{\rho}}\right)
    %\]
    \[
        \mathsf{q}_t\big(q(x),v\big) \le 2\sqrt{10}c_0'\rho^{\mathfrak{b}} \cdot\exp\left( - \frac{D\big(q(x),v\big)^2}{4t}\right)
    \]
    %on $E_1\cap E_2$.
    Let us write $\mathcalboondox{S}(x,\rho):= \left\{S_t^{x,\rho} \not \in \bar{B}_D\left(x,\frac{c_0'}{2}\rho^{\mathfrak{b}}\right)\right\} \cap E_1 \cap E_2$  and set ${\mathcalboondox{V}_{x,\rho} := \Hh_{x,\rho}\setminus \bar{B}_D(x,\frac{c_0'}{2}\rho^{\mathfrak{b}})}$. Then
    \begin{align*}
        \p\big(\mathcalboondox{S}(x,\rho) \big) &\le \E\left[\#(\mathcalboondox{V}_{x,\rho}) \cdot \sqrt{\rho} \cdot \sup_{v \in \mathcalboondox{V}_{x,\rho}}\big\{ \mathsf{q}_t(x,v)\big\} \cdot \mathbbm{1}_{(E_1\cap E_2)}\right]\\
        &\leq 2\sqrt{10}(e+1)^d \rho^{2d+{\mathfrak{b}}+1/2}\cdot\exp\left( -\frac{c_0'}{8} ~\rho^{2{\mathfrak{b}}-1} \right).
        %&\leq c_3(e+1)^d \rho^{2d+1/2}\cdot\exp\left( \frac{c_0'c_4}{2} ~\rho^{2{\mathfrak{b}}-3/2} \right).
    \end{align*}
    
    We combine the result above with \cref{prop:Hn.growth}, \eqref{eq:D.Euclid.Lm.Yao}, \eqref{eq:p.E2.comp.bound}, \eqref{eq:p.E3.c.bound}, and \eqref{eq:St.ball.bound} to verify the existence of $c_1',c_2'>0$ such that, for all $x \in \R^d$ and $\rho \geq 1$ fixed,
    \[
        \p\left(S_t^x \not \in B\left(x,\frac{\rho^{\mathfrak{b}}}{2}\right)\right) \le  \frac{c_1'}{2} \exp\left(-c_2' \rho^{2{\mathfrak{b}}-1}\right) =: \mathsf{p}_\rho.
        %\frac{c_1'}{2} \exp\left(-c_2' \rho^{2{\mathfrak{b}}-3/2}\right) =: \mathsf{p}_\rho.
    \]
    
    We now turn to the proof of \eqref{eq:first.exit.p.bound}. Let $S_\uptau:= S^x_{\uptau_\rho^x}$. Note that, since $S_\uptau$ is in the boundary of the subgraph $ \Hh \cap B(x,\rho^{\mathfrak{b}})$,
    \begin{align*}
        \p\left(\uptau^x_\rho < t\right) &= \p\big(\uptau^x_\rho < t, S^x_t \not\in B(x,\rho^{\mathfrak{b}}/2) \big) + \p\big(\uptau^x_\rho < t, S^x_t \in B(x,\rho^{\mathfrak{b}}/2)\big)\\
        &\leq \mathsf{p}_\rho + \E\left[ \mathbbm{1}_{(\uptau^x_\rho < t)} \p\big(S^{S_\uptau}_{t-\uptau^x_\rho} \in B(x,\rho^{\mathfrak{b}}/2)\big) \right]\\
        &\leq \mathsf{p}_\rho + \E\left[ \mathbbm{1}_{(\uptau^x_\rho < t)} \max_{y \in \partial B(x,\rho^{\mathfrak{b}}) \cap \Hh} \sup_{s \in [0, \rho]}\p\big(S^y_{s} \not\in B(y,\rho^{\mathfrak{b}} /2)\big) \right]\\
        &\leq \mathsf{p}_\rho + \mathsf{p}_\rho \cdot \p\left(\uptau^x_\rho < t\right).
    \end{align*}
    
    Then it follows that $\p\left(\uptau^x_\rho < t\right) \le 2\mathsf{p}_\rho$ which proves \eqref{eq:first.exit.p.bound} for $\rho\geq 1$.
    The case $\rho<1$ is covered by adjusting $c_1'$ and the proof is complete.
    \end{proof}
    
    Consider now the coalescent random walks that dominate the invasion dynamics by duality. Our aim is to determine if the blue species invades $q(x)$ up to time $t\le\rho$. Observe that any admissible path of invasion is stochastically dominated by the existence of a time-reversed random walk $S^x_{t'}$ with $t'\in[0,t]$ reaching $\upzeta$.
    
    Regard $q(x)$ as a source of random walks $\{S^{x, (i)}_{t'}: t' \in [0,t], i \in \N\}$. Set $\mathbf{n}(x)$ to be the set of neighbouring sites of $q(x)$. Write $\mathsf{N}_{x,y}^{[t]}$  for the number of points of the PPP with respect to the edge $\{q(x),y\}$ on $[0,t]$. The number of offspring of random walks starting at $q(x)$ is $\sum_{y \in \mathbf{n}(x)}\mathsf{N}^{[t]}_{x,y}$. Define the events
    \[
        E_1':=\big\{\deg\big(q(x)\big)< \sqrt{\rho}\big\}, \quad \text{and} \quad E_2':=\left\{\forall y \in \mathbf{n}\big(q(x)\big)\left(\mathsf{N}^{[t]}_{x,y}< e\cdot \rho^{3/2}\right)\right\}
    \]
    
    By applying the same upper-bounds for Poisson distributions used in the proof of \cref{cl:first.exit}, 
    \[
        \p\left((E_1')^c\right)\leq \left(\frac{ 2^d r^d }{\sqrt{\rho}}e\right)^{\sqrt{\rho}}, \quad \p\left((E_2')^c\cap E_1\right)\le \rho^{-\rho^{3/2}}.
    \]
    
    Let $E' := E_1' \cap E_2'$. then one has by \cref{cl:first.exit} that
    \[
        \p\left(\{x \in \upzeta(t)\} \cap E'\right) \le e \cdot \rho^2 \cdot c_1' \exp\left( -c_2' \rho^{2 {\mathfrak{b}} -1}\right).
    \]
    We complete the proof of \eqref{eq:invasion.q(x)} by choosing suitable $C, C'>0$.
\end{proof}

The established lemma above provides a method to control the invasion times in a region occupied by another species. The next result offers a more refined approach to studying the competition dynamics of the model. First, let us introduce some notation. Let $\bar{\upxi}, \bar{\upzeta} \subseteq \R^d$ be two disjoint sets and denote
\[P_{\bar{\upxi}, \bar{\upzeta}}(\cdot):=\p\left(~\cdot \mid\left\{\upxi(0)=\bar{\upxi}\cap\Hh, ~\upzeta(0)=\bar{\upzeta}\cap\Hh\right\}\cap\Uptheta_{w,w'}\right)\]

\begin{figure}[htb!]
    \centering
    \includegraphics[width=0.65\linewidth]{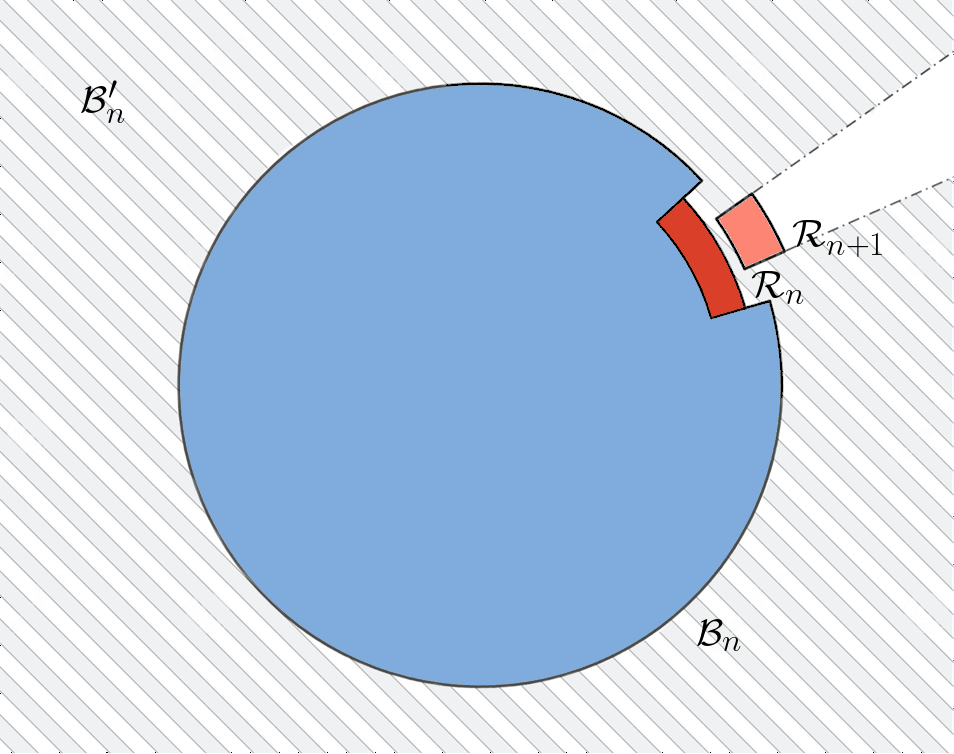}
    \caption{The regions $\mathcal{R}_n$, $\mathcal{R}_{n+1}$, $\mathcal{B}_n$, and $\mathcal{B}_n'$ before intersecting $\Hh$.}
\end{figure}
Furthermore, let us consider the following regions of $\Hh$:
\begin{align*}
\mathcal{R}_n&:=\Upphi_n(w)\\
\mathcal{B}_n &:= \left(\upvarphi \cdot B\left(o,\tfrac{1}{1+\mathfrak{d}}t_n\right) \cup\left( \left.\upvarphi \cdot B\left(o, t_n+ (t_n)^{\mathfrak{b}}\right)\right\backslash \operatorname{Cone}(w, r_n)\right)\right)\cap\Hh\text{, and}\\
    \mathcal{B}_n' &:= \left(\upvarphi \cdot B(o,t_n) \cup \big(\operatorname{Cone}(w, r_{n+1})\big)^c\right) \cap \Hh.
\end{align*}

Now, we can obtain the following property regarding the competition model.

\begin{lemma} \label{lm:control.species.growth}
    There exist constants $C,C'>0$  such that the following holds for all  $t_0>\bar{s}$. If ~$~\mathcal{R}_n \subseteq \bar{\upxi}$ and $\bar{\upzeta} \subseteq \mathcal{B}_n$, then
    \begin{equation} \label{eq:upper.bdn.blue.steps}
        P_{\bar{\upxi},\bar{\upzeta}}\big(\upzeta(\mathfrak{d}t_n)\not\subseteq \mathcal{B}_n'\big) \leq C t_n^{3d}\exp\big(-C'(t_n)^{2\mathfrak{b}-3/2}\big).
    \end{equation}
\end{lemma}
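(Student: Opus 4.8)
The plan is to reduce the bad event $\{\upzeta(\mathfrak{d}t_n)\not\subseteq\mathcal{B}_n'\}$ to an invasion-crossing event controlled by \cref{lm:control.invasion}. Writing $F_n := \bigl(\Hh\cap\operatorname{Cone}(w,r_{n+1})\bigr)\setminus \upvarphi B(o,t_n)$ for the complement of $\mathcal{B}_n'$ in $\Hh$, the event means that some vertex of $F_n$ is blue at time $\mathfrak{d}t_n$ in the sub-evolution started from $\upxi(0)=\bar{\upxi}\cap\Hh\supseteq\mathcal{R}_n$ and $\upzeta(0)=\bar{\upzeta}\cap\Hh\subseteq\mathcal{B}_n$. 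I would first record the geometric separation forced by the hypotheses: since $\mathfrak{b}<2\mathfrak{a}-1<\mathfrak{a}$, at radius $\asymp\upvarphi t_n$ the angular gap $r_n-r_{n+1}=t_n^{\mathfrak{a}-1}$ corresponds to Euclidean width $\asymp\upvarphi t_n^{\mathfrak{a}}\gg t_n^{\mathfrak{b}}$, while the radial thickness of the red annulus $\mathcal{R}_n$ is $\asymp\upvarphi\mathfrak{d}t_n/(1+\mathfrak{d})\gg t_n^{\mathfrak{b}}$. Hence every vertex of $F_n$ is separated from $\bar{\upzeta}\subseteq\mathcal{B}_n$ by a red slab of thickness at least $\asymp t_n^{\mathfrak{b}}$, either radially (through $\mathcal{R}_n$) or angularly (through the buffer $\operatorname{Cone}(w,r_n)\setminus\operatorname{Cone}(w,r_{n+1})$).

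Next I would dispose of the tip of the cone, the only part of $\operatorname{Cone}(w,r_n)$ above the annulus that is initially vacant, through a first-passage competition argument. Condition on a good event $G$ on which \cref{thm:speed.FPP,thm_new_moderate_deviations} hold for both species' growth up to time $t_{n+1}$; then red advances radially into the tip at speed $\upvarphi$ and fills $\operatorname{Cone}(w,r_n)$ out to radius $\ge\upvarphi(t_n+t_n^{\mathfrak{b}})$ within time $\lesssim t_n^{\mathfrak{b}}$, whereas blue, confined to the complement of $\operatorname{Cone}(w,r_n)$ at the start, needs time $\gtrsim t_n^{\mathfrak{a}}\gg t_n^{\mathfrak{b}}$ to cross the buffer angularly. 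Thus on $G$ red seals the bottom and sides of $F_n$ with a red layer of thickness $\ge c\,t_n^{\mathfrak{b}}$ before blue can grow across it, so that after restarting at time $\asymp t_n^{\mathfrak{b}}$ (by the Markov property) every vertex $q(x)\in F_n$ has $B(x,t_n^{\mathfrak{b}})\cap\Hh$ entirely red. The deviation controlling the fluctuation of both fronts below the buffer width $t_n^{\mathfrak{b}}$ costs, by \cref{thm_new_moderate_deviations} with $\ell\asymp t_n^{\mathfrak{b}-1/2}$, at most $C\exp(-c\,t_n^{\mathfrak{b}-1/2})$, which is negligible against the target since $\mathfrak{b}-\tfrac12>2\mathfrak{b}-\tfrac32$.

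On $G$, for blue to occupy a vertex $q(x)\in F_n$ it must invade across this red layer, so \cref{lm:control.invasion} with $\rho=t_n$ (legitimate because the remaining evolution time is $\le\mathfrak{d}t_n\le t_n$) gives $\p\bigl(q(x)\in\upzeta(\mathfrak{d}t_n)\bigr)\le C\exp(-C'\,t_n^{2\mathfrak{b}-3/2})$ for each such vertex. I would then cover the part of $F_n$ reachable by the growth by time $t_{n+1}$, a region of diameter $\asymp t_n$, and use \cref{prop:Hn.growth} to bound the number of vertices of $\Hh$ involved by $\asymp t_n^{d}$; carrying the internal polynomial factor $\asymp t_n^{2d}$ from the proof of \cref{lm:control.invasion} through the union bound yields the stated prefactor $Ct_n^{3d}$. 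Summing the per-vertex estimates over this cover and adding $\p(G^{c})$ gives \eqref{eq:upper.bdn.blue.steps}.

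The main obstacle is the middle step: turning the heuristic ``radial filling beats angular crossing'' into a rigorous statement that, off a negligible event, red seals $\operatorname{Cone}(w,r_{n+1})$ with a slab of full width $t_n^{\mathfrak{b}}$, so that \cref{lm:control.invasion} may be applied after a restart with its ball-of-red hypothesis genuinely satisfied. This is precisely where the constraints $\mathfrak{a},\mathfrak{b}\in(3/4,1)$ and $\mathfrak{b}<2\mathfrak{a}-1$ enter, and where growth and invasion must be handled simultaneously rather than treated as independent contributions.
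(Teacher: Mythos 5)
Your proposal takes essentially the same route as the paper: the paper's proof simply imports the five-claim structure of Lemma~3 in \citet{kordzakhia2005}, using the Euclidean-ball (uniformly curved) limit shape for the geometric claim, \cref{prop:Hn.growth} and \cref{thm_new_moderate_deviations} for the growth/sealing claims, and \cref{lm:control.invasion} in place of their Lemma~1 for the per-vertex invasion bound --- which is exactly the decomposition (angular/radial separation, red sealing the cone tip before blue can cross, invasion estimate with $\rho=t_n$, union bound over $\asymp t_n^d$ vertices) that you reconstruct. The ``main obstacle'' you flag is precisely the content of Claims~2--3 there, and your exponent bookkeeping ($0<2\mathfrak{b}-\tfrac{3}{2}<\mathfrak{b}-\tfrac{1}{2}$) matches the paper's.
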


\begin{proof}
    The demonstration closely follows the proof of Lemma 3 in \cite{kordzakhia2005}, with an emphasis on the key differences and necessary adjustments for our model with random structures.
    
    The sets $\mathcal{R}_n$, $\mathcal{R}_{n+1}$, $\mathcal{B}_n$, $\mathcal{B}_n'$ correspond to $\mathscr{R}_0^n$, $\mathscr{R}_1^n$, $\mathscr{B}_0^n$, $\mathscr{B}_1^n$ from \cite{kordzakhia2005}, respectively, scaled by $\upvarphi$ and intersected with $\Hh$. Similarly, the constants $\mathfrak{a}$, $\mathfrak{b}$, and $\mathfrak{d}$ represent $\alpha$, $\beta$, and $\delta$ in the same lemma.

    Their proof consists of five claims. Since the limiting shape is a Euclidean ball, which is uniformly curved, Claim 1 is immediately satisfied. The remaining four claims depend on the polynomial growth of $\Hh$, estimated by \cref{prop:Hn.growth}, and the moderate deviation given by \cref{thm_new_moderate_deviations}. Noting that $0 < 2\mathfrak{b} - \frac{3}{2} < \mathfrak{b} - \frac{1}{2}$ ensures that the upper bounds for the probabilities in Claims 2 and 3 hold true. In Claim 4, we replace the application of Lemma 1 in \cite{kordzakhia2005} with \cref{lm:control.invasion}, resulting in \eqref{eq:upper.bdn.blue.steps}.
\end{proof}

\section{The coexistence of the species} \label{sec:proof}

In this section, we address the theorem concerning the coexistence of species in the competition model. This result is derived from the intermediate condition, combined with the lemmas established in the previous section, moderate deviations, and the fact that the limiting shape is a Euclidean ball.

\begin{proof}[Proof of \cref{thm:coexistence}]
    First, since the sets $W$ and $W'$ determine a viable initial configuration, \cref{lm:intermediate.condition} ensures that the intermediate condition \eqref{intermediate.condition} holds true. Let $P(\cdot):=\p(~\cdot \mid\Uppsi_{w,w'}\big)$. We begin by finding a lower bound for the probability $\p\big(\mathrm{Coex}(\upxi,\upzeta)\big) \geq P\big(\mathrm{Coex}(\upxi,\upzeta)\big) \cdot \p(\Uppsi_{w,w'})$. Suppose, w.l.o.g., that \[P(\text{red dies out})\geq P(\text{blue dies out}),\] where `\emph{red dies out}' is the event when there exists a time $t>0$ with $\upxi(t)=\varnothing$ (similar for `\emph{blue dies out}'). Note that Boole's inequality yields
    \begin{align}
        P\big(\mathrm{Coex}(\upxi,\upzeta)\big) & \geq 1 - P\left( \exists t \geq 0 \ \text{s.t.} \ \upxi(t) = \varnothing \ \text{or} \ \upzeta(t) = \varnothing \right) \nonumber\\
        & \geq 1 - 2 P\left( \exists t \geq 0 \ \text{s.t.} \ \upxi(t) = \varnothing \right). \label{eq:coex.bound}
    \end{align}
    Let us prove that, for a $\varepsilon \in (0,1/4)$,
    \begin{equation*}
        P(\text{red dies out})=P\left( \exists t \geq 0 \ \text{s.t.} \ \upxi(t) = \varnothing \right) < 2 \varepsilon
    \end{equation*}
    to establish the theorem. Let $\Gamma:=\bigcap_{n\in\N_0} \Gamma_n$ and repeat the arguments in the proof of \cref{lm:intermediate.condition} to select a large $s_0>\bar{s}$ such that, for all $t_0\geq s_0$,
    \begin{equation} \label{eq:first.small.compr.bdn}
        P(\Gamma^c) \leq \sum_{n \in \N} P\big(\Gamma_n^c\big) < \varepsilon.
    \end{equation}
    Next, by applying \cref{lm:control.species.growth}, we find a possibly larger $s_0$ so that, for all $t_0 \geq s_0$,
    \begin{equation} \label{eq:second.small.compr.bdn}
    \sum_{n \in \N_0}P_{\mathcal{R}_n,\mathcal{B}_n}\big(\upzeta(\mathfrak{d}t_n)\not\subseteq \mathcal{B}_n'\big) <\varepsilon.
    \end{equation}
   
    By stochastic domination, \eqref{eq:first.small.compr.bdn} and \eqref{eq:second.small.compr.bdn} imply
    \begin{equation*}
        P\left( \text{red dies out} \right) \leq P(\Gamma^c)+\sum_{n\in \N_0}  P\big(\{\upzeta(t_{n+1})\not\subseteq \mathcal{B}_n'\}\cap\Gamma \mid \upzeta(t_{n})\subseteq \mathcal{B}_n\big)< 2 \varepsilon.
    \end{equation*}
    
   The inequality above leads to  $\p\big(\mathrm{Coex}(\upxi,\upzeta)\big) >(1-4\varepsilon)\p(\Uppsi_{w,w'})>0$, which is the statement of the theorem.
\end{proof}

\section*{Acknowledgements}
This research was supported by grants \#2017/10555-0, \#2019/19056-2, \#2020/12868-9, \#2023/13453-5, and \#2024/06021-4, S\~ao Paulo Research Foundation (FAPESP).  Additionally, L.R. de Lima would like to express gratitude to the Department of Mathematics at the Bernoulli Institute, University of Groningen, and the Department of Statistics at the University of Warwick, where the foundations of \cite{delima2024} and this article were developed during his visit. Their warm hospitality is sincerely appreciated. Special thanks are also due to Daniel Valesin for his invaluable contributions to the conception of this problem and for inspiring discussions that shaped this work.

%%%%%%%%%%%%--BIBLIOGRAPHY--%%%%%%%%%%%%%%%
\bibliographystyle{abbrvnat}
\bibliography{references}

\end{document}